\begin{document}
%
% NEW types of ``theorems''
%
\theoremstyle{plain}
\swapnumbers
    \newtheorem{thm}[figure]{Theorem}%[section]
    \newtheorem{prop}[figure]{Proposition}
    \newtheorem{lemma}[figure]{Lemma}
    \newtheorem{keylemma}[figure]{Key Lemma}
    \newtheorem{corollary}[figure]{Corollary}
    \newtheorem{fact}[figure]{Fact}
    \newtheorem{subsec}[figure]{}
    \newtheorem*{propa}{Proposition A}
    \newtheorem*{thma}{Theorem A}
    \newtheorem*{thmb}{Theorem B}
    \newtheorem*{thmc}{Theorem C}
    \newtheorem*{thmd}{Theorem D}
\theoremstyle{definition}
    \newtheorem{defn}[figure]{Definition}
    \newtheorem{example}[figure]{Example}
    \newtheorem{examples}[figure]{Examples}
    \newtheorem{notation}[figure]{Notation}
    \newtheorem{summary}[figure]{Summary}
\theoremstyle{remark}
        \newtheorem{remark}[figure]{Remark}
        \newtheorem{remarks}[figure]{Remarks}
        \newtheorem{warning}[figure]{Warning}
    \newtheorem{assume}[figure]{Assumption}
    \newtheorem{ack}[figure]{Acknowledgements}
\renewcommand{\thefigure}{\arabic{section}.\arabic{figure}}
%
%\makeatletter
%\@addtoreset{figure}{theorem}
%\makeatother
%
%   Special diagram/display environments
%
\newenvironment{myeq}[1][]
{\stepcounter{figure}\begin{equation}\tag{\thefigure}{#1}}
{\end{equation}}
\newcommand{\myeqn}[2][]
{\stepcounter{figure}\begin{equation}
     \tag{\thefigure}{#1}\vcenter{#2}\end{equation}}
\newcommand{\mydiag}[2][]{\myeq[#1]\xymatrix{#2}}
\newcommand{\mydiagram}[2][]
{\stepcounter{figure}\begin{equation}
     \tag{\thefigure}{#1}\vcenter{\xymatrix{#2}}\end{equation}}
\newcommand{\mysdiag}[2][]
{\stepcounter{figure}\begin{equation}
     \tag{\thefigure}{#1}\vcenter{\xymatrix@R=15pt@C=20pt{#2}}\end{equation}}
\newcommand{\myrdiag}[2][]
{\stepcounter{figure}\begin{equation}
     \tag{\thefigure}{#1}\vcenter{\xymatrix@R=20pt@C=4pt{#2}}\end{equation}}
\newcommand{\mywdiag}[2][]
{\stepcounter{figure}\begin{equation}
     \tag{\thefigure}{#1}\vcenter{\xymatrix@R=20pt@C=12pt{#2}}\end{equation}}
\newcommand{\myzdiag}[2][]
{\stepcounter{figure}\begin{equation}
     \tag{\thethm}{#1}\vcenter{\xymatrix@R=5pt@C=30pt{#2}}\end{equation}}
%use:  \mydiagram[\label{``label''}]{``xy-picture syntax''}
%
\newenvironment{mysubsection}[2][]
{\begin{subsec}\begin{upshape}\begin{bfseries}{#2.}
\end{bfseries}{#1}}
{\end{upshape}\end{subsec}}
\newenvironment{mysubsect}[2][]
{\begin{subsec}\begin{upshape}\begin{bfseries}{#2\vsn.}
\end{bfseries}{#1}}
{\end{upshape}\end{subsec}}
\newcommand{\supsect}[2]
{\vspace*{-5mm}\quad\\\begin{center}\textbf{{#1}}\vsm.~~~~\textbf{{#2}}\end{center}}
\newcommand{\sect}{\setcounter{figure}{0}\section}
%
%            More space around math formulas
%
\newcommand{\wh}{\ -- \ }
\newcommand{\wwh}{-- \ }
\newcommand{\w}[2][ ]{\ \ensuremath{#2}{#1}\ }
\newcommand{\ww}[1]{\ \ensuremath{#1}}
\newcommand{\www}[2][ ]{\ensuremath{#2}{#1}\ }
\newcommand{\wwb}[1]{\ \ensuremath{(#1)}-}
\newcommand{\wb}[2][ ]{\ (\ensuremath{#2}){#1}\ }
\newcommand{\wref}[2][ ]{\ (\ref{#2}){#1}\ }
\newcommand{\wwref}[2]{\ (\ref{#1})-(\ref{#2})\ }
%
%    spacing
%
\newcommand{\hsp}{\hspace*{9 mm}}
\newcommand{\hs}{\hspace*{4 mm}}
\newcommand{\hsn}{\hspace*{1 mm}}
\newcommand{\hsm}{\hspace*{2 mm}}
\newcommand{\vsn}{\vspace{2 mm}}
\newcommand{\vs}{\vspace{5 mm}}
\newcommand{\vsm}{\vspace{3 mm}}
\newcommand{\vsp}{\vspace{9 mm}}
%
%        Arrows and symbols
%
\newcommand{\hra}{\hookrightarrow}
\newcommand{\xra}[1]{\xrightarrow{#1}}
\newcommand{\xepic}[1]{\xrightarrow{#1}\hspace{-5 mm}\to}
\newcommand{\lora}{\longrightarrow}
\newcommand{\lra}[1]{\langle{#1}\rangle}
\newcommand{\llrra}[1]{\langle\langle{#1}\rangle\rangle}
\newcommand{\llrr}[2]{\llrra{#1}\sb{#2}}
\newcommand{\llrrp}[2]{\llrra{#1}'\sb{#2}}
\newcommand{\lrf}{\langle\langle f\lo{0,1}\rangle\rangle}
\newcommand{\lrfn}[1]{\lrf\sb{#1}}
\newcommand{\lras}[1]{\langle{#1}\rangle\sb{\ast}}
\newcommand{\lrau}[1]{\langle{#1}\rangle\sp{\ast}}
\newcommand{\vlam}{\vec{\lambda}}
\newcommand{\epic}{\to\hspace{-3.5 mm}\to}
\newcommand{\xhra}[1]{\overset{#1}{\hookrightarrow}}
\newcommand{\efp}{\to\hspace{-1.5 mm}\rule{0.1mm}{2.2mm}\hspace{1.2mm}}
\newcommand{\efpic}{\mbox{$\to\hspace{-3.5 mm}\efp$}}
\newcommand{\up}[1]{\sp{(#1)}}
\newcommand{\bup}[1]{\sp{[{#1}]}}
\newcommand{\lo}[1]{\sb{(#1)}}
\newcommand{\lolr}[1]{\sb{\lra{#1}}}
\newcommand{\bp}[1]{\sb{[#1]}}
\newcommand{\hfsm}[2]{{#1}\ltimes{#2}}
\newcommand{\sms}[2]{{#1}\wedge{#2}}
\newcommand{\rest}[1]{\lvert\sb{#1}}
%
%          operators
%
\newcommand{\ab}{\operatorname{ab}}
\newcommand{\Aut}{\operatorname{Aut}}
\newcommand{\Cof}[1]{\operatorname{Cof}(#1)}
\newcommand{\Coker}{\operatorname{Coker}}
\newcommand{\colim}{\operatorname{colim}}
\newcommand{\comp}{\mbox{\sf comp}}
\newcommand{\Cone}{\operatorname{Cone}}
\newcommand{\csk}[1]{\operatorname{csk}\sp{#1}}
\newcommand{\diag}{\operatorname{diag}}
\newcommand{\ev}{\operatorname{ev}}
\newcommand{\exc}{\operatorname{ex}}
\newcommand{\Ext}{\operatorname{Ext}}
\newcommand{\Fib}{\operatorname{Fib}}
\newcommand{\fwd}{\operatorname{fwd}}
\newcommand{\gr}{\operatorname{gr}}
\newcommand{\ho}{\operatorname{ho}}
\newcommand{\hocofib}{\operatorname{hocofib}}
\newcommand{\hocolim}{\operatorname{hocolim}}
\newcommand{\holim}{\operatorname{holim}}
\newcommand{\Hom}{\operatorname{Hom}}
\newcommand{\inc}{\operatorname{inc}}
\newcommand{\Ker}{\operatorname{Ker}}
\newcommand{\Id}{\operatorname{Id}}
\newcommand{\Image}{\operatorname{Im}}
\newcommand{\md}{\operatorname{mid}}
\newcommand{\Obj}[1]{\operatorname{Obj}\,{#1}}
\newcommand{\op}{\sp{\operatorname{op}}}
\newcommand{\pt}{\operatorname{pt}}
\newcommand{\sk}[1]{\operatorname{sk}\sb{#1}}
\newcommand{\SL}[1]{\operatorname{SL}\sb{#1}(\ZZ)}
\newcommand{\Sq}[1]{\operatorname{Sq}\sp{#1}}
\newcommand{\Tot}{\operatorname{Tot}}
\newcommand{\uTot}{\underline{\Tot}}
%
%            Mapping spaces
%
\newcommand{\map}{\operatorname{map}}
%
%        Cohomology
%
\newcommand{\Hu}[3]{H\sp{#1}({#2};{#3})}
\newcommand{\Hus}[2]{\Hu{\ast}{#1}{#2}}
\newcommand{\HuF}[2]{\Hu{#1}{#2}{\Fp}}
\newcommand{\HuFs}[1]{\HuF{\ast}{#1}}
\newcommand{\HuT}[2]{\Hu{#1}{#2}{\Ft}}
\newcommand{\HuTs}[1]{\HuT{\ast}{#1}}
%
%          Homology
%
\newcommand{\Hi}[3]{H\sb{#1}({#2};{#3})}
\newcommand{\His}[2]{\Hi{\ast}{#1}{#2}}
\newcommand{\HiF}[2]{\Hi{#1}{#2}{\Fp}}
\newcommand{\HiZ}[2]{\Hi{#1}{#2}{\ZZ}}
\newcommand{\HiZp}[2]{\Hi{#1}{#2}{\Zp}}
\newcommand{\HiFs}[1]{\His{#1}{\Fp}}
\newcommand{\HiT}[2]{\Hi{#1}{#2}{\Ft}}
\newcommand{\HiTs}[1]{\HiT{\ast}{#1}}
%
%          Spectral sequences
%
\newcommand{\Ei}[3]{E\sb{#1}\sp{{#2},{#3}}}
\newcommand{\Eis}[2]{E\sb{#1}(#2)}
\newcommand{\Eu}[3]{E\sp{#1}\sb{{#2},{#3}}}
\newcommand{\Eus}[1]{E\sp{#1}}
\newcommand{\Euz}[1]{E\sp{0}({#1})}
%
%           Calligraphic
%
\newcommand{\A}{\mathcal{A}}
\newcommand{\tA}{\widetilde{A}}
\newcommand{\wA}{\widehat{A}}
\newcommand{\B}{\mathcal{B}}
\newcommand{\C}{\mathcal{C}}
\newcommand{\D}{\mathcal{D}}
\newcommand{\wD}{\widehat{D}}
\newcommand{\E}{\mathcal{E}}
\newcommand{\wE}{\widehat{E}}
\newcommand{\cF}[1]{\mathcal{F}\sp{#1}}
\newcommand{\wf}{\widehat{f}}
\newcommand{\wI}{\widehat{I}}
\newcommand{\KK}[1]{{\mathcal{K}}\sb{#1}}
\newcommand{\M}{\mathcal{M}}
\newcommand{\Map}{{\EuScript Map}}
\newcommand{\MA}{\Map\sb{\A}}
\newcommand{\MAB}{\Map\sb{\A}\sp{\B}}
\newcommand{\MB}{\Map\sp{\B}}
\newcommand{\OO}{\mathcal{O}}
\newcommand{\cP}[1]{\mathcal{P}\sp{#1}}
\newcommand{\PP}[2]{\cP{#1}\sb{#2}}
\newcommand{\hP}[2]{\widehat{\mathcal{P}}\sp{#1}\sb{#2}}
\newcommand{\Ss}{\mathcal{S}}
\newcommand{\Sa}{\Ss\sb{\ast}}
\newcommand{\U}{\mathcal{U}}
\newcommand{\eW}{{\EuScript W}}
\newcommand{\eX}{{\EuScript X}}
\newcommand{\eY}{{\EuScript Y}}
\newcommand{\eZ}{{\EuScript Z}}
%
%          Categories
%
\newcommand{\hy}[2]{{#1}\text{-}{#2}}
\newcommand{\Alg}[1]{{#1}\text{-}{\mbox{\sf Alg}}}
\newcommand{\Pa}[1][ ]{$\Pi$-algebra{#1}}
\newcommand{\PAlg}{\Alg{\Pi}}
\newcommand{\pis}{\pi\sb{\ast}}
\newcommand{\gS}[1]{{\EuScript S}\sp{#1}}
\newcommand{\Mod}[1]{{#1}\text{-}{\mbox{\sf Mod}}}
\newcommand{\us}{u\sp{\ast}}
\newcommand{\Set}{\mbox{\sf Set}}
\newcommand{\Seta}{\Set\sb{\ast}}
\newcommand{\Cat}{\mbox{\sf Cat}}
\newcommand{\Ch}{\mbox{\sf Ch}}
\newcommand{\Grp}{\mbox{\sf Gp}}
\newcommand{\OC}{\hy{\OO}{\Cat}}
\newcommand{\SC}{\hy{\Ss}{\Cat}}
\newcommand{\SaC}{\hy{\Sa}{\Cat}}
\newcommand{\SO}{(\Ss,\OO)}
\newcommand{\SaO}{(\Sa,\OO)}
\newcommand{\SOC}{\hy{\SO}{\Cat}}
\newcommand{\SaOC}{\hy{\SaO}{\Cat}}
\newcommand{\Top}{\mbox{\sf Top}}
\newcommand{\Topa}{\Top\sb{\ast}}
%
%             Blackboard boldface
%
\newcommand{\FF}{\mathbb F}
\newcommand{\Fp}{\FF\sb{p}}
\newcommand{\Ft}{\FF\sb{2}}
\newcommand{\Fq}{\FF\sb{q}}
\newcommand{\NN}{\mathbb N}
\newcommand{\QQ}{\mathbb Q}
\newcommand{\ZZ}{\mathbb Z}
\newcommand{\Zp}{\ZZ\sb{(p)}}
\newcommand{\Zt}{\ZZ\sb{(2)}}

%
%       Spaces
%
\newcommand{\bA}{{\mathbf A}}
\newcommand{\bB}{{\mathbf B}}
\newcommand{\bC}{{\mathbf C}}
\newcommand{\bD}{{\mathbf D}}
\newcommand{\cD}{D}
\newcommand{\bDel}{\mathbf{\Delta}}
\newcommand{\Del}[1]{\bDel\sp{#1}}
\newcommand{\Dels}[2]{\Del{#1}\lo{#2}}
\newcommand{\Dlt}[1]{\Delta[{#1}]}
\newcommand{\bE}{{\mathbf E}}
\newcommand{\be}[1]{{\mathbf e}\sp{#1}}
\newcommand{\bF}{{\mathbf F}}
\newcommand{\Fv}[1]{\bF\bp{#1}}
\newcommand{\Fk}[1]{F\sp{#1}}
\newcommand{\Fpp}{\hspace{0.5mm}'\hspace{-0.7mm}F}
\newcommand{\Fpk}[1]{\hspace{0.5mm}'\hspace{-0.7mm}F\sp{#1}}
\newcommand{\Fn}[2]{F\sp{#1}\bp{#2}}
\newcommand{\bG}{{\mathbf G}}
\newcommand{\Gv}[1]{\bG\bp{#1}}
\newcommand{\hGv}[1]{\widehat{\bG}\bp{#1}}
\newcommand{\Gn}[2]{G\sp{#1}\bp{#2}}
\newcommand{\hGn}[2]{\widehat{G}\sp{#1}\bp{#2}}
\newcommand{\tg}[1]{\widetilde{g}\sp{#1}}
\newcommand{\bH}{{\mathbf H}}
\newcommand{\wH}{\widehat{H}}
\newcommand{\Hv}[1]{\bH\bp{#1}}
\newcommand{\hHv}[1]{\wH\bp{#1}}
\newcommand{\Hn}[2]{H\sp{#1}\bp{#2}}
\newcommand{\tH}[1]{\widetilde{H}\sp{#1}}
\newcommand{\tHn}[2]{\tH{#1}\bp{#2}}
\newcommand{\bi}{{\mathbf i}}
\newcommand{\bj}{{\mathbf j}}
\newcommand{\bK}{{\mathbf K}}
\newcommand{\KP}[2]{\bK({#1},{#2})}
\newcommand{\KR}[1]{\KP{R}{#1}}
\newcommand{\KZ}[1]{\KP{\ZZ}{#1}}
\newcommand{\KZp}[1]{\KP{\Zp}{#1}}
\newcommand{\KZt}[1]{\KP{\Zt}{#1}}
\newcommand{\KF}[1]{\KP{\Fp}{#1}}
\newcommand{\bM}[1]{{\mathbf M}\sp{#1}}
\newcommand{\bP}{{\mathbf P}}
\newcommand{\bS}[1]{{\mathbf S}\sp{#1}}
\newcommand{\bT}{\mathbf{\Theta}}
\newcommand{\TA}{\bT\sp{\A}}
\newcommand{\TB}{\bT\sb{\B}}
\newcommand{\ThB}{\Theta\sb{\B}}
\newcommand{\TAB}{\bT\sp{\A}\sb{\B}}
\newcommand{\TR}{\Theta\sb{R}}
\newcommand{\TRl}{\Theta\sb{R}\sp{\lambda}}
\newcommand{\bU}{{\mathbf U}}
\newcommand{\bV}{{\mathbf V}}
\newcommand{\bW}{{\mathbf W}}
\newcommand{\bX}{{\mathbf X}}
\newcommand{\hX}{\widehat{\bX}}
\newcommand{\bY}{{\mathbf Y}}
\newcommand{\hY}{\widehat{\bY}}
\newcommand{\bZ}{{\mathbf Z}}
%
%                    Greek letter maps
%
\newcommand{\bdel}{\bar{\delta}}
\newcommand{\eps}[1]{\epsilon\sb{#1}}
\newcommand{\gam}[1]{\gamma\sb{#1}}
\newcommand{\iot}[1]{\iota\sb{#1}}
\newcommand{\vare}{\varepsilon}
\newcommand{\ett}[1]{\eta\sb{#1}}
\newcommand{\blam}{\bar{\lambda}}
\newcommand{\wvarp}{\widehat{\varphi}}
\newcommand{\bn}{[\mathbf{n}]}
\newcommand{\wrho}{\widehat{\rho  }}
\newcommand{\tS}{\widetilde{\Sigma}}
\newcommand{\wsig}{\widehat{\sigma}}
%
%                  Simplicial objects
%
\newcommand{\Cd}[1]{\bC[{#1}]\sb{\bullet}}
\newcommand{\oT}[1]{\overline{T}\sb{#1}}
\newcommand{\Vd}{V\sb{\bullet}}
\newcommand{\oV}[1]{\overline{V}\sb{#1}}
\newcommand{\Wd}{\bW\sb{\bullet}}
\newcommand{\oW}[1]{\overline{\bW}\sb{#1}}
%
%            Cohomology
%
\newcommand{\cH}[3]{H\sp{#1}({#2};{#3})}
\newcommand{\HR}[2]{\cH{#1}{#2}{R}}
\newcommand{\HsR}[1]{\HR{\ast}{#1}}
\newcommand{\HF}[2]{\cH{#1}{#2}{\Fp}}
\newcommand{\HsF}[1]{\HF{\ast}{#1}}
%
%     Mapping algebras
%
\newcommand{\ma}[1][ ]{mapping algebra{#1}}
\newcommand{\Ama}[1][ ]{$\A$-mapping algebra{#1}}
\newcommand{\ABma}[1][ ]{$\A$-$\B$-bimapping algebra{#1}}
\newcommand{\Bma}[1][ ]{$\B$-dual mapping algebra{#1}}
\newcommand{\Tma}[1][ ]{$\bT$-mapping algebra{#1}}
\newcommand{\lin}[1]{\{{#1}\}}
\newcommand{\fff}{\mathfrak{f}}
\newcommand{\fM}{\mathfrak{M}}
\newcommand{\fMA}{\fM\sp{\A}}
\newcommand{\fMB}{\fM\sb{\B}}
\newcommand{\fMAB}{\fM\sp{\A}\sb{\B}}
\newcommand{\fMT}{\fM\sb{\bT}}
\newcommand{\fMR}{\fM\sb{R}}
\newcommand{\fT}{\mathfrak{T}}
\newcommand{\fTd}{\fT\sb{\bullet}}
\newcommand{\fV}{\mathfrak{V}}
\newcommand{\fVd}{\fV\sb{\bullet}}
\newcommand{\fVu}{\fV\sp{\bullet}}
\newcommand{\fW}{\mathfrak{W}}
\newcommand{\fWd}{\fW\sb{\bullet}}
\newcommand{\fX}{\mathfrak{X}}
\newcommand{\fY}{\mathfrak{Y}}
\newcommand{\fZ}{\mathfrak{Z}}
\newcommand{\fin}{\operatorname{fin}}
\newcommand{\init}{\operatorname{init}}
\newcommand{\vf}{v\sb{\fin}}
\newcommand{\vi}{v\sb{\init}}
%
%\begin{document}
%
%           Title
%
\title{Note on Toda brackets}
%                                 }
\author{Samik Basu, David Blanc, and Debasis Sen}
\address{Stat-Math Unit, Indian Statistical Institute, Kolkata 700108, India}
\email{samikbasu@isical.ac.in}
\address{Department of Mathematics\\ University of Haifa\\ 3498838 Haifa\\ Israel}
\email{blanc@math.haifa.ac.il}
\address{Department of Mathematics \& Statistics\\
Indian Institute of Technology, Kanpur\\ Uttar Pradesh 208016\\ India}
\email{debasis@iitk.ac.in}

\date{\today}

\subjclass{Primary: 55Q35; \ secondary: 55P99, 55Q40}
\keywords{Higher homotopy operations, Toda brackets, stable homotopy}

\begin{abstract}
  We provide a general definition of Toda brackets in a pointed model categories, show
  how they serve as obstructions to rectification, and explain their relation to the
  classical stable operations.
\end{abstract}

\maketitle

\setcounter{section}{0}

%
%c0   Introduction
%
\section*{Introduction}
\label{cint}

Toda brackets, defined by Toda in \cite{TodG}, play an important role in homotopy
theory both for their original purpose of calculating homotopy groups in \cite{TodC},
and because they serve as differentials in spectral sequences (see \cite{AdHI} and
\cite{BBlaH,CFranH}). The notion has been generalized in several ways
(see, e.g., \cite{BBGondH,BJTurnC,GWalkL}), not all of which agree.

In this note we provide a definition of higher Toda brackets in a general
pointed model category $\C$, show how these appear as the successive obstructions to
strictifying certain diagrams (namely, chain complexes in the homotopy category
\w[)]{\ho\C} \wwh see Theorem \ref{trect} below \wh and explain to connection with
the traditional stable description in terms of filtered complexes.

\begin{ack}
The research of the second author was supported by Israel Science
Foundation grant 770/16.
\end{ack}

%
%c1    Higher Toda brackets
%
\sect{Higher Toda brackets}
\label{ctoda}

We provide a variant of the definition of higher Toda brackets sketched in
\cite[\S 7]{BJTurnC} which brings out clearly their connection to rectification
of linear diagrams.

\begin{defn}\label{dtodadiag}
Let $\C$ be a cofibrantly generated left proper pointed model category.
A \emph{Toda diagram}  of length $n$ for $\C$ is a diagram in the homotopy category
\w{\ho\C} of the form
\begin{myeq}\label{eqtodadiag}
A\sb{0}~\xrightarrow{[f\sb{0}]}~A\sb{1}~\xrightarrow{[f\sb{1}]}~A\sb{2}~\to~\dotsc~\to~
A\sb{n-1} ~\xrightarrow{[f\sb{n-1}]}~A\sb{n}
\end{myeq}
\noindent with \w{[f\sb{k}]\circ[f\sb{k-1}]=0} (the strict zero map) for each \w[.]{1\leq k<n}

A \emph{strictification} of \wref{eqtodadiag} is a diagram in $\C$ of the form
\begin{myeq}\label{eqstodadiag}
A'\sb{0}~\xrightarrow{f'\sb{0}}~A'\sb{1}~\xrightarrow{f'\sb{1}}~A'\sb{2}~\to~\dotsc~\to~
A'\sb{n-1} ~\xrightarrow{f'\sb{n-1}}~A'\sb{n}
\end{myeq}
\noindent with \w{f'\sb{k}\circ f'\sb{k-1}=0} for each \w[,]{1\leq k<n} which is weakly
equivalent to a lift
of \wref[.]{eqtodadiag}
\end{defn}

To obtain a homotopy meaningful description of such strictifications, we shall need:

\begin{defn}\label{detodadiag}
  Let \w{I\sp{n}\sb{2}} denote the lattice of subsets of \w[,]{\bn=\{1,2,\dotsc,n\}} which
  we shall think of as an $n$-dimensional cube with vertices labelled by
  \w{J=(\vare\sb{1},\vare\sb{2},\dotsc,\vare\sb{n})} with each \w[,]{\vare\sb{i}\in\{0,1\}}
  so $J$ is the characteristic function of a subset of  $\bn$.
The cube is partially ordered in the usual way, with a unique map \w{J\to J'} whenever
\w{\vare\sb{i}\leq\vare'\sb{i}} for all\w[.]{1\leq i\leq n} Write \w{J\sb{k}} for
the vertex labelled by $k$ ones followed by \w{n-k} zeros \wb[.]{k=0,1,\dotsc,n}

Note that any strictification \wref{eqstodadiag} extends uniquely to a diagram
\w[,]{D:I\sp{n}\sb{2}\to\C} in which \w[,]{D(J\sb{k})=A'\sb{k}} $D$ assigns the map \w{f'\sb{k}} to the
unique map \w{J\sb{k}\to J\sb{k+1}} in \w[,]{I\sp{n}\sb{2}} and $D$ sends all other vertices (and thus
all other maps) to $0$.

The category \w{\C\sp{I\sp{n}\sb{2}}} of all ``cube-shaped'' diagrams in $\C$ has
a cofibrantly generated model category structure, in which the weak equivalences
and fibrations are defined objectwise (see \cite[Theorem 11.6.1]{PHirM}).
Any diagram \w{\wD:I\sp{n}\sb{2}\to\C} weakly equivalent to the diagram $D$
just described will be called an \emph{enhanced strictification}
of \wref[.]{eqtodadiag}  In particular, when $\wD$ is a cofibrant replacement
for $D$ in the model category mentioned above, we call it a
\emph{cofibrant enhanced strictification}.
\end{defn}

\begin{remark}\label{retodadiag}
The cofibrations in the model category \w{\C\sp{I\sp{n}\sb{2}}} are not easy to identify explicitly.
However, because the indexing category is directed in a strong sense, the cofibrant diagrams
\w{B:I\sp{n}\sb{2}\to\C} may be described inductively by filtering \w{I\sp{n}\sb{2}} by composition
length: \w[.]{F\sb{0}\subset F\sb{1}\subset\dotsc\subset F\sb{n}=I\sp{n}\sb{2}} We start with
\w{F\sb{0}} consisting only of the initial object \w[;]{J\sb{0}} then \w{F\sb{k+1}} is obtained
  inductively from \w{F\sb{k}} by adding all indecomposable maps in \w{I\sp{n}\sb{2}} from objects in
  \w[,]{F\sb{k}} together with their targets.

  We then have a recursive definition of what it means for a diagram $B$ to be cofibrant, by requiring
  that   \w{B\rest{J\sb{k}}} be cofibrant for each $k$ (for \w[,]{k=0} this
just means ensuring \w{B(J\sb{0})} is cofibrant in $\C$). and then ensuring that the natural map
\w{\colim\sb{F\sb{k}}\,B\to B(J)} is a cofibration for each \w{J\in F\sb{k+1}\setminus F\sb{k}}
(cf.\ \cite[\S 2.14]{BJTurnH}).
In particular, this ensures that all morphisms in \w{I\sp{n}\sb{2}} are taken by $B$ to cofibrations
in $\C$.
\end{remark}

\begin{defn}\label{dextodadiag}
  Now let \w{I\sp{n}\sb{3}} denote the $n$-dimensional cube of side length $2$,
  with vertices labelled as before by
  \w[,]{J=(\vare\sb{1},\vare\sb{2},\dotsc,\vare\sb{n})} but now each
\w[.]{\vare\sb{i}\in\{0,1,2\}} Again, \w{I\sp{n}\sb{3}} is partially ordered as above.

Note that any cofibrant enhanced strictification \w{\wD:I\sp{n}\sb{2}\to\C} has a unique
extension to an \emph{extended diagram} \w[,]{E:I\sp{n}\sb{3}\to\C} with the following
property:

For each \w{1\leq k\leq n} and
\w[,]{\vare\sb{1},\dotsc,\vare\sb{k-1},\vare\sb{k+1}\dotsc,\vare\sb{n}\in\{0,1\}}
set:
\begin{myeq}\label{eqextodadiag}
  \begin{cases}
    J'&=(\vare\sb{1},\dotsc,\vare\sb{k-1},0,\vare\sb{k+1}\dotsc,\vare\sb{n})\\
    J''&=(\vare\sb{1},\dotsc,\vare\sb{k-1},1,\vare\sb{k+1}\dotsc,\vare\sb{n})\\
    J'''&=(\vare\sb{1},\dotsc,\vare\sb{k-1},2,\vare\sb{k+1}\dotsc,\vare\sb{n}).
    \end{cases}
\end{myeq}
\noindent We then have a strict cofibration sequence:
\begin{myeq}\label{eqcofseq}
E(J')~\hra~ E(J'')~\epic~E(J''')~.
\end{myeq}
\noindent That is,  \w{E(J')\hra E(J'')} is a cofibration in $\C$ (see \S \ref{rcofsq}),
and \w{E(J'')} is the colimit of \w[.]{\ast\leftarrow E(J')\to E(J'')}

This defines $E$, by functoriality of strict cofibration sequences.
In fact, \wref{eqcofseq} will be a strict (and also homotopy) cofibration sequence for every
\w{1\leq k\leq n} and
\w[,]{\vare\sb{1},\dotsc,\vare\sb{k-1},\vare\sb{k+1}\dotsc,\vare\sb{n}\in\{0,1,2\}}
by the \w{3\times 3} Lemma (the cofibration sequence version of
\cite[XII, Lemma 3.4]{MacLH}, which actually follows from it by working in
simplicial groups). Here we use the description of $\wD$ in Remark \ref{retodadiag}
and the fact that $\C$ is left proper.

The original \wwb{2\times\dotsc\times 2}cube diagram \w{\wD=E\rest{I\sp{n}\sb{2}}} will
be called the \emph{generating cube} for $E$.
\end{defn}

\begin{notation}\label{nextodadiag}
  For any index \w{J=(\vare\sb{1},\vare\sb{2},\dotsc,\vare\sb{n})}
  \wb[,]{\vare\sb{i}\in\{0,1,2\}} we shall use the following terminology:
\begin{enumerate}
\renewcommand{\labelenumi}{(\arabic{enumi})~}
\item The longest initial segment of $J$ with \w{\vare\sb{i}\in\{1,2\}}  will be called
  the \emph{marker} of $J$ (it may be empty, or all of $J$), and denoted by \w[.]{M(J)}
\item The length of the final segment of \w{M(J)} consisting of digits 2 only will be called the
  \emph{stage} of $J$, and denoted by \w[.]{\sigma(J)}
\item The complementary final segment of $J$ after \w{M(J)} (necessarily starting with a 0, if
  nonempty) is called the \emph{remainder} of $J$, and denoted by \w[.]{R(J)} The number of
  digits 2 in \w{R(J)} will be denoted by \w[.]{r(J)}
\end{enumerate}
\end{notation}

\begin{example}\label{egnextodadiag}
  For \w{J=(1221122202012012)} we have marker \w[.]{M(J)=(12211222)} Its stage is
  \w{\sigma(J)=3} (the length of the sequence \w{222} at the end of \w[),]{M(J)}
  and the remainder is \w[,]{R(J)=02012012} with \w[.]{r(J)=3}
\end{example}

We note the following obvious facts:

\begin{lemma}\label{lextodadiag}
  For any index \w[,]{J=(\vare\sb{1},\vare\sb{2},\dotsc,\vare\sb{n})} the extended diagram
  \w{E:I\sp{n}\sb{3}\to\C}  of any cofibrant enhanced strictification
  \w{\wD:I\sp{n}\sb{2}\to\C} as above has the following properties
\begin{enumerate}
\renewcommand{\labelenumi}{(\alph{enumi})~}
\item If \w{R(J)} has at least one digit 1, then \w{E(J)} is weakly contractible.
\item If \w{R(J)} has no $1$'s, then \w{E(J)} is weakly equivalent to
\w[,]{\Sigma\sp{r(J)}E(J')} where \w{J'} is obtained from $J$ by replacing all $2$'s in
\w{R(J)} by digits 0.
\item In particular, if \w{\sigma(J)=0} and \w{R(J)} has no digits 1, then
  \w[,]{E(J)\simeq \Sigma\sp{r(J)}A\sb{k}} where $k$ is the length of \w[.]{M(J)}
  Thus if \w{\sigma(J)=0} and \w{R(J)} has only digits 0, then \w[.]{E(J)\simeq A\sb{k}}
  In particular, \w[,]{E(J\sb{k})\simeq A\sb{k}} as in Definition \ref{detodadiag}.
  Here \w{\Sigma\sp{r(J)}A} is the \ww{r(J)}-fold suspension of $A$ (cf.\ \S\ref{nextodadiag}(3)).
\item If \w{\sigma(J)>0} and \w{R(J)} has no digits 1, then \w{E(J)} is the cofiber of a map
\w[,]{E(J\sb{0})\to E(J\sb{1})} where \w{\sigma(J\sb{0})=0} and \w[.]{\sigma(J\sb{1})=\sigma(J)-1}
Thus we can think of \w{\sigma(J)} as the ``cone length'' of \w{E(J)}  with respect to the objects
\w{A\sb{0},\dotsc,A\sb{n}} and their suspensions.
\end{enumerate}
\end{lemma}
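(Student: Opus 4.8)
The plan is to establish (a)--(d) in that order, each part feeding into the next. Everything rests on the single structural fact recorded in Definition~\ref{dextodadiag}: for an index $J$ whose $k$-th coordinate equals $2$, writing $J\rest{k=i}$ for the index obtained from $J$ by resetting the $k$-th coordinate to $i$, the sequence $E(J\rest{k=0})\hra E(J\rest{k=1})\epic E(J)$ is a homotopy cofibration sequence. I combine this with three routine facts about a homotopy cofibration sequence $X\to Y\to Z$ in a pointed model category: it is invariant under weak equivalence; $Z\simeq Y$ if $X\simeq\ast$; and $Z\simeq\Sigma X$ if $Y\simeq\ast$. With these in hand the whole argument reduces to bookkeeping about how the marker $M(J)$, the stage $\sigma(J)$ and the remainder $R(J)$ change when one ``peels off'' a single digit~$2$. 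The base of every induction is the generating cube: for $J\in I\sp{n}\sb{2}$ one has $E(J)=\wD(J)\simeq D(J)$, which equals $A\sb{k}$ when $J=J\sb{k}$ lies on the spine -- equivalently, when $R(J)$ has no digit $1$, in which case $k=|M(J)|$ -- and equals $\ast$ otherwise, i.e.\ precisely when $R(J)$ contains a $1$.

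I would prove (a) by induction on the number of digits $2$ in $J$, the case of none being the generating-cube statement just recalled. Suppose $R(J)$ contains a $1$ and $J$ has a digit $2$, say at position $k$; apply the cofibration sequence at $k$. Both $J\rest{k=0}$ and $J\rest{k=1}$ have strictly fewer $2$'s, and a quick check in each of the two cases shows that each again has a $1$ in its remainder: if $k$ lies in $R(J)$, then $J\rest{k=1}$ acquires a fresh $1$ there while $J\rest{k=0}$ keeps the old one, the marker being untouched since $k$ lies beyond the first $0$ of $R(J)$; if $k$ lies in $M(J)$, then $J\rest{k=1}$ changes nothing outside the marker, while in $J\rest{k=0}$ the marker is truncated at $k$, which pushes the tail of the old marker -- together with the old remainder, already containing a $1$ -- into the new remainder. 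By the inductive hypothesis both $E(J\rest{k=0})$ and $E(J\rest{k=1})$ are weakly contractible, hence so is $E(J)$.

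Part (b) follows by induction on $r(J)$, using (a): if $r(J)=0$ then $J'=J$ and there is nothing to prove; otherwise peel a digit $2$ at a position $k$ of $R(J)$, which lies beyond the first $0$ of $R(J)$, so the marker is unchanged. Then $R(J\rest{k=1})$ contains a $1$, so $E(J\rest{k=1})\simeq\ast$ by (a) and the cofibration sequence gives $E(J)\simeq\Sigma E(J\rest{k=0})$; and $R(J\rest{k=0})$ still has no $1$, with $r(J\rest{k=0})=r(J)-1$ and $(J\rest{k=0})'=J'$, so the inductive hypothesis yields $E(J\rest{k=0})\simeq\Sigma\sp{r(J)-1}E(J')$. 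For (c), first use (b) to pass from $J$ to $J'$ -- which changes neither $M(J)$ nor $\sigma(J)$ -- reducing the claim to $E(J')\simeq A\sb{k}$ with $k=|M(J')|$ and $R(J')$ all zeros; then induct on the number of $2$'s in $M(J')$. As $\sigma(J')=0$, the marker ends in a $1$ (or is empty), so any digit $2$ in it sits at a position $p<k$; peeling it, the truncated marker of $J'\rest{p=0}$ pushes the $1$ at position $k$ into the remainder, so $E(J'\rest{p=0})\simeq\ast$ by (a) and hence $E(J')\simeq E(J'\rest{p=1})$, while $J'\rest{p=1}$ has the same marker length $k$ with one fewer $2$. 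The base case is $J'=J\sb{k}$, where $E(J\sb{k})\simeq A\sb{k}$; the two displayed corollaries ($r(J)=0$, and $J=J\sb{k}$) are immediate specializations.

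For (d) the decisive combinatorial choice is to peel the \emph{first} digit $2$ of the terminal run of $\sigma(J)$ twos inside $M(J)$, at position $p=|M(J)|-\sigma(J)+1$. Then the marker of $J\sb{0}:=J\rest{p=0}$ is truncated just before that run, so its last marker digit is a $1$ (or the marker is empty) and $\sigma(J\sb{0})=0$, while the remaining $\sigma(J)-1$ twos of the run drop into $R(J\sb{0})$, which still has no $1$; and $J\sb{1}:=J\rest{p=1}$ keeps the marker length but now has a terminal run of exactly $\sigma(J)-1$ twos, so $\sigma(J\sb{1})=\sigma(J)-1$ and $R(J\sb{1})=R(J)$. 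The cofibration sequence at $p$ is therefore exactly $E(J\sb{0})\to E(J\sb{1})\to E(J)$, which is the assertion of (d). For the cone-length remark, induct on $\sigma(J)$: by (c), $E(J\sb{0})$ is a suspension of some $A\sb{j}$, so rotating this cofibration sequence to $E(J\sb{1})\to E(J)\to\Sigma E(J\sb{0})$ exhibits $E(J)$ as built from $E(J\sb{1})$ by attaching the single cell $\Sigma E(J\sb{0})$, and $E(J\sb{1})$ has cone length $\sigma(J)-1$ -- by the inductive hypothesis, or by (c) when $\sigma(J)=1$. The only genuine work in all of this is the marker/stage/remainder bookkeeping; the one real subtlety -- which I expect to be the crux -- is the choice in (d) of the \emph{first} rather than the last digit $2$ of the terminal run, since that is exactly what forces the source $E(J\sb{0})$ of the cofibration sequence to have stage $0$, hence (via (c)) to be a plain suspension of some $A\sb{j}$.
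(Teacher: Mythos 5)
Your proposal is correct, and it is precisely the argument the paper has in mind: the paper states Lemma \ref{lextodadiag} as ``obvious facts'' with no written proof (offering only the illustrative diagram of \S\ref{scnth}), and the intended justification is exactly your iteration of the cofibration sequences \wref{eqcofseq} one coordinate at a time, anchored in the fact that the generating cube is contractible off the spine \w{\{J\sb{k}\}} and equal to \w{A\sb{k}} on it. Your marker/stage/remainder bookkeeping — in particular peeling the \emph{first} digit $2$ of the terminal run in part (d) so that the source of the cofibration sequence has stage $0$ — correctly supplies the details the authors omitted.
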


See the diagram in Example \ref{scnth} below for an illustration of these properties.

\begin{defn}\label{dhtb}
  We now explain how to associate to a Toda diagram of length \w{n+1}
  (Definition \ref{dtodadiag}), with
certain additional data, the Toda bracket \w[,]{\lra{f\sb{0},\dotsc,f\sb{n}}}
which serves as the (last) obstruction to strictifying the Toda diagram \wref[,]{eqtodadiag}
for \w{n\geq 1} (cf.\ \cite[\S 7]{BJTurnC}).

The construction is inductive, and a necessary condition
in order for it to be defined is that the Toda brackets \w{\lra{f\sb{0},\dotsc,f\sb{n-1}}}
and \w[,]{\lra{f\sb{1},\dotsc,f\sb{n}}}  associated respectively to the initial and final segments
of \wref{eqtodadiag} of length $n$, must vanish.

Vanishing of \w{\lra{f\sb{0},\dotsc,f\sb{n-1}}}  implies that we may choose a strictification
\w{D:I\sp{n}\sb{2}\to\C} of the initial segment, with the corresponding cofibrant enhanced
strictification \w{\wD:I\sp{n}\sb{2}\to\C} and extended diagram \w[.]{E:I\sp{n}\sb{3}\to\C}
By Lemma \ref{lextodadiag}(c), we may choose a representative
\w{f\sb{n}:E(J\sb{n})\to A\sb{n+1}} for the last term \w{[f\sb{n+1}]} in the diagram
\wref{eqtodadiag} (the construction does not actually depend on the choice of \w[).]{f\sb{n}}

The \emph{data} for  \w{\lra{f\sb{0},\dotsc,f\sb{n}}} consists of the weak homotopy
type of $D$
in \w[,]{\C\sp{I\sp{n}\sb{2}}} together with (the homotopy class of) a map
\w{\varphi\sb{n}:E(1,2,\dotsc, 2)\to A\sb{n+1}} whose restriction to
\w{E(1,2,\dotsc,2,1)\simeq A\sb{n}} represents \w[.]{[f\sb{n}]}
Note that \w{E( 1,2,\dotsc, 2)} has ``cone length'' \w[,]{n-1}  by Lemma \ref{lextodadiag}(d).
The class of  \w{\varphi\sb{n}} is determined by a choice of nullhomotopy for the
(inductively determined) value of \w[.]{\lra{f\sb{1},\dotsc,f\sb{n}}}

Since \w{E(0,2,\dotsc,2)\simeq\Sigma\sp{n-1}A\sb{0}} by Lemma \ref{lextodadiag}(c),
and precomposing \w{\varphi\sb{n}} with \w{\alpha\sb{n}:E(0,2,\dotsc,2)\to E(1,2,\dotsc,2)}
yields the \emph{value} in \w{[\Sigma\sp{n-1}A\sb{0},\,A\sb{n+1}]} of the Toda bracket
(associated to the data \w[).]{\lra{D,\varphi\sb{n}}}

Formally, the \emph{Toda bracket} \w{\lra{f\sb{0},\dotsc,f\sb{n}}} is the subset of 
\w{[\Sigma\sp{n-1}A\sb{0},\,A\sb{n+1}]} consisting of all such values. The differences
between the various values together constitute the \emph{indeterminacy} of
\w[.]{\lra{f\sb{0},\dotsc,f\sb{n}}} However, we shall not be concerned with these two notions
here: essentially, we are only interested in the question of whether certain elements in
\w{[\Sigma\sp{n-1}A\sb{0},\,A\sb{n+1}]} can be obtained as the value of some Toda bracket.

Since
$$
E(0,2,\dotsc,2)~\xra{\alpha\sb{n}}~E(1,2,\dotsc,2)~\epic~E(2,2,\dotsc,2)
$$
\noindent is a (homotopy) cofibration sequence, if
\w[,]{\varphi\sb{n}\circ\alpha\sb{n}\sim 0} we can choose an extension
\w{\psi\sb{n}:E(2 ,2,\dotsc, 2)\to A\sb{n+1}} of \w[,]{\varphi\sb{n}} up to homotopy.
\end{defn}

See the diagrams in Examples \ref{scntw} and \ref{scnth} below for an illustration
of these properties.

\begin{lemma}\label{lhtb}
  A choice of \w{\psi\sb{n}:E(2 ,2,\dotsc, 2)\to A\sb{n+1}} extending
  \w{\varphi\sb{n}:E(1,2,\dotsc, 2)\to A\sb{n+1}}
as above yields a strictification for
\w{A\sb{0}\xra{f\sb{0}}\dotsc \xra{f\sb{n}} A\sb{n+1}} extending
the class of $D$.
\end{lemma}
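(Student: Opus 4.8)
The plan is to write the strictification down explicitly as a monotone path through the extended diagram $E$ of Definition~\ref{dextodadiag}, capped off by $\psi\sb{n}$, and to read the strict vanishing of consecutive composites directly off the strict cofibration sequences \eqref{eqcofseq} out of which $E$ is assembled. Concretely, for $1\le k\le n$ I would set $A'\sb{k}:=E(\underbrace{2,\dotsc,2}_{k-1},1,\underbrace{0,\dotsc,0}_{n-k})$ and $A'\sb{0}:=E(J\sb{0})=E(0,\dotsc,0)$; by Lemma~\ref{lextodadiag}(c) each $A'\sb{k}$ is weakly equivalent to $A\sb{k}$ (its marker has length $k$ and stage $0$), and $A'\sb{n}=E(2,\dotsc,2,1)$. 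Then factor $\psi\sb{n}$ as a cofibration followed by an acyclic fibration $E(2,\dotsc,2)\hra A'\sb{n+1}\xra{\sim}A\sb{n+1}$, so $A'\sb{n+1}\simeq A\sb{n+1}$. For $0\le k\le n-1$ let $f'\sb{k}\colon A'\sb{k}\to A'\sb{k+1}$ be the value of $E$ on the unique monotone map between the corresponding indices, and let $f'\sb{n}\colon A'\sb{n}\to A'\sb{n+1}$ be the composite $A'\sb{n}=E(2,\dotsc,2,1)\epic E(2,\dotsc,2)\hra A'\sb{n+1}$.

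The key point is that each index map above factors in $I\sp{n}\sb{3}$ as ``raise one coordinate from $1$ to $2$'' followed by ``raise the next coordinate from $0$ to $1$''; under $E$ the first kind of map is a projection $E(J'')\epic E(J''')$ from \eqref{eqcofseq}, and the second is a cofibration $E(J')\hra E(J'')$. Hence in each composite $f'\sb{k}\circ f'\sb{k-1}$ the two middle maps are precisely the cofibration followed by the projection in one and the same strict cofibration sequence \eqref{eqcofseq} — the one in the coordinate raised $0\to1\to2$ — so their composite is the strict zero map, and $f'\sb{k}\circ f'\sb{k-1}=0$ in $\C$ for every $1\le k\le n$. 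This is the one place where one uses that \eqref{eqcofseq} is a strict cofibration sequence for every coordinate and every value of the remaining coordinates in $\{0,1,2\}$, i.e.\ left-properness of $\C$ and the cofibrant structure of $\wD$ (Remark~\ref{retodadiag}).

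It then remains to identify the weak homotopy type of $A'\sb{0}\to\dots\to A'\sb{n+1}$. Applying Lemma~\ref{lextodadiag}(a) repeatedly — whenever a coordinate is raised from $1$ to $2$ over an index whose remainder already contains a digit $1$, the object being quotiented out is weakly contractible — one checks that the canonical maps $E(J\sb{k})\xra{\sim}A'\sb{k}$ and $E(1,2,\dotsc,2,1)\xra{\sim}A'\sb{n}$ are weak equivalences, compatibly with the diagonal structure maps of $E$. Since the diagonal of $E$ on $J\sb{0},\dots,J\sb{n}$ represents $A\sb{0}\to\dots\to A\sb{n}$ in $\ho\C$, this shows $A'\sb{0}\to\dots\to A'\sb{n}$ agrees up to weak equivalence with the diagonal of $\wD$, so it extends the class of $D$. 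For the last map, the commuting square in $I\sp{n}\sb{3}$ relating $E(1,2,\dotsc,2,1)\to E(1,2,\dotsc,2)$ to $A'\sb{n}\to E(2,\dotsc,2)$, together with the facts that $\psi\sb{n}$ extends $\varphi\sb{n}$ along $E(1,2,\dotsc,2)\epic E(2,\dotsc,2)$ and that $\varphi\sb{n}$ restricts on $E(1,2,\dotsc,2,1)\simeq A\sb{n}$ to a representative of $[f\sb{n}]$, gives $[f'\sb{n}]=[f\sb{n}]$. Thus $A'\sb{0}\to\dots\to A'\sb{n+1}$ is a diagram in $\C$ with strictly vanishing consecutive composites which, via the equivalences $A'\sb{k}\simeq A\sb{k}$, represents the Toda diagram; being weakly equivalent to a lift of it and extending $D$, it is the required strictification.

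I expect the main obstacle to be the bookkeeping in the last step: one must choose the monotone index maps so that \emph{simultaneously} (i) every consecutive composite runs through a single $0\to1\to2$ cofibration sequence and (ii) all vertices used have the correct weak homotopy type with the correct structure maps; and then one must check that the identification $A'\sb{n}\simeq A\sb{n}$ coming from the diagonal of $E$ agrees up to homotopy with the one coming from $\varphi\sb{n}$, so that $f'\sb{n}$ genuinely represents $[f\sb{n}]$. The remaining verifications — that the indicated maps really are cofibrations, projections, or weak equivalences, and that linear diagrams with a prescribed image in $\ho\C$ are weakly equivalent — are routine.
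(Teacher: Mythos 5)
Your proof is correct and follows essentially the same route as the paper's: the same staircase $E(0,\dotsc,0)\to E(1,0,\dotsc,0)\to E(2,1,0,\dotsc,0)\to\dotsb\to E(2,\dotsc,2,1)\to E(2,\dotsc,2)\xra{\psi_{n}}A_{n+1}$, with the strict vanishing of consecutive composites read off the cofibration sequences \eqref{eqcofseq} and the identification of the vertices with the $A_{k}$ via Lemma \ref{lextodadiag}. Your extra cofibration--acyclic-fibration factorization of $\psi_{n}$ and the more explicit verification that the last map represents $[f_{n}]$ are harmless elaborations of what the paper leaves implicit.
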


\begin{proof}
The strictification is given by
$$
E(0,0,\dotsc,0)\to E(1,0,\dotsc,0)\to E(2,1,0,\dotsc,0)\to\dotsc
E(2,2,\dotsc,2)\xra{\psi\sb{n}}A\sb{n+1}.
$$
\noindent Since each adjacent composition is of the form \wref{eqcofseq} for
\wref[,]{eqextodadiag} it is strictly zero. The fact that \w{E(2,\dotsc,2,1,0,\dotsc,0)}
(with \w{k-1} digits 2) is weakly equivalent to \w{A\sb{k}} follows from Lemma \ref{lextodadiag},
which implies the map out of \w{E(2,\dotsc,2,1,0,\dotsc,0)} represents \w{f\sb{k}} by the
original construction of $D$.
\end{proof}

\begin{remark}\label{rhtb}
Note that the ``middle cube'' \w{I\sp{n-1}\sb{\md}} of \w[,]{I\sp{n-1}\sb{3}} consisting of
the vertices indexed by \w{J=(\vare\sb{1},\dotsc,\vare\sb{n-1})} with \w[,]{\vare\sb{1}=1}
corresponds to the segment \w{A\sb{1}\xra{f\sb{1}}\dotsc \xra{f\sb{n-1}}A\sb{n}} of
\wref{eqtodadiag} (that is, \w{I\sp{n-1}\sb{\md}} is a cofibrant enhanced strictification
of this segment in the sense of \S \ref{detodadiag}).

Applying Definition \ref{dhtb} to \w{E\rest{I\sp{n-1}\sb{\md}}} we see that in addition to the
choices encoded in the given strictification of this segment , the data we obtain consists of
\w{\varphi'\sb{n-1}:E(1,1,2,\dotsc, 2)\to A\sb{n+1}} whose restriction to
\w{E(1,1,2,\dotsc,2,1)\simeq A\sb{n}} again represents \w[.]{[f\sb{n}]} The corresponding
value for the length $n$ Toda bracket \w{\lra{f\sb{1},\dotsc,f\sb{n}}} is
\w[.]{\varphi'\sb{n-1}\circ\alpha'\sb{n-1}:E(1,0,2,\dotsc,2)\simeq\Sigma\sp{n-2}A\sb{1}\to
  A\sb{n+1}}
If this does not vanish, we must try other choices of the data. If this value is zero,
we choose an extension \w{\psi'\sb{n-1}:E(1,2 ,2,\dotsc, 2)\to A\sb{n+1}}
for \w{\varphi'\sb{n-1}}  in the (homotopy) cofibration sequence
\w[.]{E(1,0,2,\dotsc,2)~\xra{\alpha'\sb{n-1}}~E(1,1,2,\dotsc,2)~\epic~E(1,2,2,\dotsc,2)}
In fact, this map \w{\psi'\sb{n-1}} is precisely \w[,]{\varphi\sb{n}}  the second piece of input
needed to define our value for \w[.]{\lra{f\sb{0},\dotsc,f\sb{n}}}
\end{remark}

\begin{mysubsection}{The case $n=2$}\label{scntw}
  In the first step of our inductive process, for \w[.]{n=2} there is no obstruction
  to strictification, and our cofibrant enhanced strictification
  \w{\wD:I\sp{2}\sb{2}\to\C} is obtained in three steps:

\begin{enumerate}
\renewcommand{\labelenumi}{(\alph{enumi})~}
\item We choose a cofibrant replacement for \w[,]{A\sb{0}} and represent \w{[f\sb{0}]} by
a cofibration \w[,]{f\sb{0}:A\sb{0}\to A\sb{1}} and \w{[f\sb{1}]} by
any map \w[.]{f\sb{1}:A\sb{1}\to A\sb{2}}
\item  Since \w[,]{[f\sb{1}]\circ[f\sb{0}]=0} in \w[,]{\ho\C} we can choose a nullhomotopy
  \w{F:CA\sb{0}\to A\sb{2}} for \w[,]{f\sb{1}]\circ[f\sb{0}} making the outer square in the following
  diagram commute, where the inner square is the (homotopy) pushout (which is the homotopy cofiber
  of   \w[):]{f\sb{0}}
\mydiagram[\label{eqtwosquare}]{
  A\sb{0} \ar@{^{(}->}[d]_{f\sb{0}}  \ar@{^{(}->}[rr]^{i} &&
  CA\sb{0} \ar@{^{(}->}[d]^{j} \ar@/^2em/[ddr]^{F} \\
  A\sb{1}  \ar@{^{(}->}[rr]_{k} \ar@/_2em/[drrr]_{f\sb{1}} && \Cof{f\sb{0}} \ar@{-->}[rd]^{\xi} \\
  &&& A\sb{2}
}
\item Changing $\xi$ into a cofibration \w{\xi':\Cof{f\sb{0}}\hra \wA\sb{2}} and precomposing with
  the given structure maps $j$ and $k$ yields the required cofibrant  \w[.]{\wD:I\sp{2}\sb{2}\to\C}
\end{enumerate}

This description makes it clear that even though the homotopy
classes of nullhomotopies $F$ for \w{f\sb{1}\circ f\sb{0}} are in
bijection with \w{[\Sigma A\sb{0},\,A\sb{2}]} (see \cite[\S 1]{SpanS}),
the homotopy type of possible  (enhanced)
strictifications \w{\wD} (given \w{[f\sb{0}]} and \w[)]{[f\sb{1}]}
is completely determined by \w[.]{[\xi]\in[\Cof{f\sb{0}},\,A\sb{2}]}
Thus, perhaps surprisingly, the homotopy classification of strictifications has
less information than the choices of nullhomotopies.

The extended diagram \w{E:I\sp{3}\sb{3}\to\C} is then the solid \w{3\times 3} square in
\mydiagram[\label{eqthreesquare}]{
  \stackrel{\mbox{$A\sb{0}$}}{\mbox{\scriptsize{(00)}}} \ar@<2ex>@{^{(}->}[rr]^{i}
  \ar@{^{(}->}[d]_{f\sb{0}}    &&
    \stackrel{\mbox{$CA\sb{0}$}}{\mbox{\scriptsize{(01)}}} \ar@<2ex>@{->>}[rr]
    \ar@<1ex>@{^{(}->}[d]^{F} &&
    \stackrel{\mbox{$\tS A\sb{0}$}}{\mbox{\scriptsize{(02)}}}
    \ar@<1ex>@{^{(}->}[d]^{\alpha\sb{2}} && \\
    \stackrel{\mbox{$A\sb{1}$}}{\mbox{\scriptsize{(10)}}}
    \ar@<2ex>@{^{(}->}[rr]^{f\sb{1}} \ar@{->>}[d] &&
  \stackrel{\mbox{$A\sb{2}$}}{\mbox{\scriptsize{(11)}}}   \ar@<2ex>@{->>}[rr]
  \ar@<1ex>@{->>}[d]_(0.40){\simeq}^(0.55){q} \ar@/_2em/@{-->}[ddrrrr]_{f\sb{2}} &&
  \stackrel{\mbox{$C\sb{f\sb{1}}$}}{\mbox{\scriptsize{(12)}}} \ar@{->>}[d]
  \ar@/^2em/@{-->}[ddrr]^{\varphi\sb{2}} && \\
\stackrel{\mbox{$C\sb{f\sb{0}}$}}{\mbox{\scriptsize{(20)}}} \ar@<2ex>@{^{(}->}[rr]^{h\sb{0}}
&& \stackrel{\mbox{$\tA\sb{2}$}}{\mbox{\scriptsize{(21)}}} \ar@<2ex>@{->>}[rr] &&
\stackrel{\mbox{$C\sb{h\sb{0}} =C\sb{\alpha\sb{0}} $}}{\mbox{\scriptsize{(22)}}}
\ar@{.>}[drr]^{\exists?\psi\sb{2}} &&\\
  && && && A\sb{3}
}
\noindent (cf.\ the dual diagram in \cite[(0.3)]{BSenH}).

Here (and later in the paper) \w{\tS A} denotes the version of the suspension of $A$
obtained as the cofiber of \w[,]{A\hra CA} as in the first row of \wref[.]{eqthreesquare}
\end{mysubsection}

\begin{remark}\label{rcofsq}
  We note that more generally if we start with the pushout $P$ of two
  cofibrations \w{f:X\hra Y} and \w{k:X\hra Z} in a pointed model category,
  as in \wref[,]{eqtwosquare} the resulting extended diagram takes the form:
\mydiagram[\label{eqtwosquares}]{
  X \ar@{^{(}->}[d]_{h}  \ar@{^{(}->}[rr]^{f} && Y \ar@{^{(}->}[d]^{i} \ar[rr] && C\sb{f} \ar[d]^{=} \\
  Z \ar[d]  \ar@{^{(}->}[rr]^{j} && P \ar[d] \ar[rr] && C\sb{j} \ar[d] \\
C\sb{h} \ar[rr]^{=} && C\sb{i} \ar[rr] && \ast
}
\noindent If we have a further cofibration \w[,]{\xi:P\hra W} as in \S \ref{scntw}(c), and let
\w{g:=\xi\circ j:Z\to W} and \w[,]{k:=\xi\circ i:Y\to W}  then taking
iterated pushouts in the following solid diagram
\mydiagram[\label{eqitpushouts}]{
  Z \ar[d]  \ar@{^{(}->}[rr]^{j} \ar@/^2em/@{^{(}->}[rrrr]^{g} && 
  P \ar@{.>}[d] \ar@{^{(}->}[rr]^{\xi} && W \ar@{.>}[d] \\
\ast \ar@{^{(}.>}[rr] && C\sb{j} \ar@{^{(}.>}[rr]^{\ell} && Q 
}
\noindent we see that the map $\ell$ is again a cofibration, by cobase change, and that
$Q$ is in fact \w{C\sb{g}} (the cofiber of $g$), since the large rectangle in
\wref{eqitpushouts} is a also pushout. Thus from \wref{eqtwosquares} we deduce that the induced map
\w{\ell:C\sb{f}\to C\sb{g}} is a cofibration (and similarly for \w[).]{m:C\sb{h}\to C\sb{k}} 

In summary, whenever our initial square \w{D:I\sp{n}\sb{2}\to\C} is cofibrant in the
model category structure of \S \ref{retodadiag}, all rows and columns in the
extended diagram \w{E:I\sp{n}\sb{3}\to\C} are strict cofibration sequences.
\end{remark}

\begin{mysubsection}{The ordinary Toda bracket}\label{sotb}
Our formalism indicates that \w{\varphi\sb{2}=f\sb{1}} and
\w[,]{\alpha\sb{2}=f\sb{1}} so the ``Toda bracket
\w{\lra{f\sb{0},f\sb{1}}} of length $2$'' is the composite
\w[,]{f\sb{1}\circ f\sb{0}} which is nullhomotopic by assumption.
A choice of nullhomotopy determines the extension
\w[,]{\psi\sb{2}=h\sb{0}} and in fact setting
\w[,]{\widetilde{f}\sb{1}:=q\circ f\sb{1}:A\sb{1}\to\tA\sb{2}} we
obtain a strictification for the initial length $2$ subdiagram of
\wref[,]{eqtodadiag} as implied by Lemma \ref{lhtb}.

Now we add the map \w{f\sb{2}:A\sb{2}\to A\sb{3}} to the solid
square in \wref[:]{eqthreesquare} since \w{f\sb{2} \circ f\sb{1}\sim 0} and the
middle row is a (homotopy) cofibration sequence, we have an induced map
\w[,]{\varphi:C\sb{f\sb{1}}\to A\sb{3}} and we define the \emph{value}
of the Toda bracket (of length $2$) associated to these choices to be the
homotopy class of \w[.]{\varphi\circ g\sb{0}:\tS A\sb{0}\to A\sb{3}}
Since the right column is again a (homotopy)
cofibration sequence, $\varphi$ extends to $\psi$ as indicated if and only if
\w[.]{\varphi\circ g\sb{0}\sim 0}
In this case we see that \w{\widetilde{f}\sb{2}:=\psi\circ r:\tA\sb{2}\to A\sb{3}}
represents \w[,]{f\sb{2}} and we also have \w{\widetilde{f}\sb{2}\circ\widetilde{f}\sb{1}=0}
\wwh in other words, we can extend
the above to a strictification of the initial length $3$ subdiagram of \wref{eqtodadiag} if and only if
\w{\varphi\circ g\sb{0}\sim 0} for some choice of $\xi$ and $\varphi$\vsm.
\end{mysubsection}

\begin{mysubsection}{The case $n=3$}\label{scnth}
Here the extended diagram is described by the solid \w{3\times 3\times 3} cube:
$$
\xymatrix@R=10pt@C=5pt{
%\mywdiag[\label{eqthreecube}]{
  %
  %  Top square
  %
  \stackrel{\mbox{$A\sb{0}$}}{\mbox{\scriptsize{(000)}}} \ar@<2ex>@{^{(}->}[rrr]
  \ar@<1ex>@{^{(}->}[rrd]_{f\sb{0}}  \ar@{^{(}->}[ddd] &&&
  \stackrel{\mbox{$CA\sb{0}$}}{\mbox{\scriptsize{(010)}}} \ar@<2ex>@{->>}[rrr]
  \ar@<1ex>@{^{(}->}[rrd]^{F} \ar@{^{(}->}|(.27){\hole}|(.46){\hole}[ddd] &&&
  \stackrel{\mbox{$\tS A\sb{0}$}}{\mbox{\scriptsize{(020)}}}
  \ar@<1ex>@{^{(}->}[rrd]^{\alpha\sb{2}}  \ar@{^{(}->}|(.27){\hole}|(.46){\hole}|(.60){\hole}[ddd] && \\
  &&   \stackrel{\mbox{$A\sb{1}$}}{\mbox{\scriptsize{(100)}}} \ar@<2ex>@{^{(}->}[rrr]^{f\sb{1}}
  \ar@<1ex>@{->>}[rrd] \ar@{^{(}->}[ddd] &&&
  \stackrel{\mbox{$A\sb{2}$}}{\mbox{\scriptsize{(110)}}}   \ar@<2ex>@{->>}[rrr]
  \ar@<1ex>@{->>}[rrd]_(0.40){\simeq}^(0.55){q\sb{2}}
  \ar@{^{(}->}|(.27){\hole}[ddd]^(0.45){f\sb{2}} &&&
  \stackrel{\mbox{$C\sb{f\sb{1}}$}}{\mbox{\scriptsize{(120)}}} \ar@{->>}[rrd]
  \ar@{^{(}->}|(.27){\hole}[ddd]^{h\sb{1}} \\
  &&&&
\stackrel{\mbox{$C\sb{f\sb{0}}$}}{\mbox{\scriptsize{(200)}}} \ar@<2ex>@{^{(}->}[rrr]^{h\sb{0}}
\ar@{^{(}->}[ddd]
&&& \stackrel{\mbox{$\tA\sb{2}$}}{\mbox{\scriptsize{(210)}}} \ar@<2ex>@{->>}[rrr]
\ar@{^{(}->}[ddd]^(0.3){f'\sb{2}} &&&
\stackrel{\mbox{$C\sb{h\sb{0}} =C\sb{\alpha\sb{2}} $}}{\mbox{\scriptsize{(220)}}}
\ar@{^{(}->}[ddd]^{\ell\sb{0}} \\
%
%           second square
%
\stackrel{\mbox{$CA\sb{0}$}}{\mbox{\scriptsize{(001)}}} \ar@<2ex>@{^{(}->}|(.55){\hole}[rrr]
  \ar@<1ex>@{^{(}->}[rrd]  \ar@{->>}[ddd]  &&&
  \stackrel{\mbox{$C\Sigma A\sb{0}$}}{\mbox{\scriptsize{(011)}}}
  \ar@<2ex>@{->>}|(.35){\hole}|(.65){\hole}[rrr]
  \ar@<1ex>@{^{(}->}|(.50){\hole}[rrd] \ar@{->>}|(.27){\hole}|(.46){\hole}[ddd] &&&
  \stackrel{\mbox{$C\tS A\sb{0}$}}{\mbox{\scriptsize{(021)}}}
  \ar@<1ex>@{^{(}->}|(.50){\hole}[rrd] \ar@{->>}|(.27){\hole}|(.46){\hole}|(.60){\hole}[ddd] && \\
  &&   \stackrel{\mbox{$CA\sb{1}$}}{\mbox{\scriptsize{(101)}}}
  \ar@<2ex>@{^{(}->}|(.70){\hole}[rrr] \ar@<1ex>@{->>}[rrd] \ar@{->>}[ddd] &&&
  \stackrel{\mbox{$A\sb{3}$}}{\mbox{\scriptsize{(111)}}}   \ar@<2ex>@{->>}|(.70){\hole}[rrr]^{\simeq}
  \ar@<1ex>@{->>}[rrd]^{\simeq} \ar@{->>}|(.27){\hole}[ddd]
  \ar@/_7em/@{-->}[dddddrrrrrr]^(0.8){f\sb{3}} &&&
  \stackrel{\mbox{$A'\sb{3}$}}{\mbox{\scriptsize{(121)}}} \ar@<1ex>@{->>}[rrd]^{\simeq}
  \ar@{->>}|(.26){\hole}[ddd]   &&&\\
&&&& \stackrel{\mbox{$CC\sb{f\sb{0}}$}}{\mbox{\scriptsize{(201)}}} \ar@<2ex>@{^{(}->}[rrr]
\ar@{->>}[ddd]
&&& \stackrel{\mbox{$A''\sb{3}$}}{\mbox{\scriptsize{(211)}}}
\ar@<2ex>@{->>}[rrr]_{\simeq} \ar@{->>}[ddd] &&&
\stackrel{\mbox{$A'''\sb{3}$}}{\mbox{\scriptsize{(220)}}} \ar@{->>}[ddd] \\
%
%           third square
%
\stackrel{\mbox{$\tS A\sb{0}$}}{\mbox{\scriptsize{(002)}}} \ar@<2ex>@{^{(}->}|(.55){\hole}[rrr]
  \ar@<1ex>@{^{(}->}[rrd]_{\tS f\sb{0}}  &&&
  \stackrel{\mbox{$C\tS A\sb{0}$}}{\mbox{\scriptsize{(012)}}}
  \ar@<2ex>@{->>}|(.35){\hole}|(.65){\hole}[rrr]  \ar@<1ex>@{^{(}->}|(.50){\hole}[rrd] &&&
  \stackrel{\mbox{$\tS\sp{2} A\sb{0}$}}{\mbox{\scriptsize{(022)}}}
  \ar@<1ex>@{->>}|(.50){\hole}[rrd]^(0.25){\alpha\sb{3}} && \\
  &&   \stackrel{\mbox{$\tS A\sb{1}$}}{\mbox{\scriptsize{(102)}}}
  \ar@<2ex>@{^{(}->}|(.68){\hole}[rrr]^{\alpha'\sb{2}}   \ar@<1ex>@{->>}[rrd]  &&&
  \stackrel{\mbox{$C\sb{f\sb{2}}$}}{\mbox{\scriptsize{(112)}}}
  \ar@<2ex>@{->>}|(.70){\hole}[rrr]  \ar@<1ex>@{->>}[rrd]^{\simeq}
  \ar@/_5em/@{-->}[ddrrrrrr]_(0.5){\varphi'\sb{2}}  &&&
  \stackrel{\mbox{$C\sb{\alpha'\sb{2}}=C\sb{h\sb{1}}$}}{\mbox{\scriptsize{(122)}}}
  \ar@<1ex>@{->>}[rrd]  \ar@/^5em/@{-->}[ddrrr]^(0.5){\varphi\sb{3}}  \\
&&&&
  \stackrel{\mbox{$\tS C\sb{f\sb{0}}$}}{\mbox{\scriptsize{(202)}}}
  \ar@<2ex>@{^{(}->}[rrr]_(0.4){k\sb{0}}
&&& \stackrel{\mbox{$C\sb{f'\sb{2}}$}}{\mbox{\scriptsize{(212)}}}
\ar@<2ex>@{->>}[rrr]_{\simeq} &&&
\stackrel{\stackrel{\mbox{$C\sb{\alpha\sb{3}}=$}}
  {\mbox{$C\sb{k\sb{0}}=C\sb{\ell\sb{0}}$}}}{\mbox{\scriptsize{(222)}}}
\ar@{.>}[dr]^{\exists?\psi\sb{3}} &\\
%
%        Last target
%
&&&& &&& &&&  & A\sb{4}
}
$$
\end{mysubsection}

We may summarize the results of this section in the following

\begin{thm}\label{trect}
Assume given a Toda diagram \wref{eqtodadiag} in a model category $\C$ as in \S
\ref{dtodadiag}.
\begin{enumerate}
\renewcommand{\labelenumi}{(\alph{enumi})~}
\item For any representatives \w{f\sb{0}} and \w[,]{f\sb{1}} a choice of
  nullhomotopy \w{F:f\sb{1}\circ f\sb{0}\sim\ast} yields an extended diagram
\w{E:I\sp{3}\sb{3}\to\C} as in \S \ref{scntw}. One can strictify the initial
segment of \wref{eqtodadiag} of length $3$ if and only there is a value of the
Toda bracket which vanishes (for some choice of $F$ and \w{\varphi\sb{2}}
in in \w[).]{eqthreesquare} 
\item Any strictification of the initial segment of \wref{eqtodadiag} of length
  $k$ is equivalent (up to weak equivalence of diagrams) to a
  enhanced strictification \w{\wD:I\sp{k+1}\sb{2}\to\C} as in \S \ref{detodadiag}
\item Given such a (cofibrant) enhanced strictification, with the corresponding
  extended diagram \w{E:I\sp{k+1}\sb{3}\to\C} as in \S \ref{dextodadiag},
  the given strictification extends to one of initial segment of \wref{eqtodadiag}
  of length \w{k+1} if and only if there is a choice of data for
  \w{\lra{f\sb{0},\dotsc,f\sb{k+1}}} for which the Toda bracket vanishes (i.e.,
  has value $0$). 
\end{enumerate}
\end{thm}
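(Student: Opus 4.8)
The plan is to deduce the theorem from the constructions already carried out in this section, of which it is essentially only a summary: the three parts follow by assembling Lemma~\ref{lhtb}, Definition~\ref{dhtb} and Remark~\ref{rhtb} with the model structure on \w[.]{\C\sp{I\sp{n}\sb{2}}} I would organize the argument around part~(c), recover part~(a) from its first case, and dispose of part~(b) first as a preliminary. For part~(b): a strictification \wref{eqstodadiag} extends canonically, by Definition~\ref{detodadiag}, to a diagram \w{D:I\sp{k+1}\sb{2}\to\C} sending every vertex off the principal edge to \w[;]{\ast} a cofibrant replacement \w{\wD\xra{\simeq}D} in the model category of \cite[Theorem 11.6.1]{PHirM} is then a cofibrant enhanced strictification weakly equivalent to the original, and by the inductive description of cofibrant diagrams (Remark~\ref{retodadiag}) its restriction to the principal edge is again a strictification of the same segment, with all edge maps cofibrations. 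This step is formal.

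For the ``if'' direction of part~(c), I would start from a cofibrant enhanced strictification \w{\wD:I\sp{k+1}\sb{2}\to\C} with extended diagram \w[,]{E:I\sp{k+1}\sb{3}\to\C} using Remark~\ref{rcofsq} that all rows and columns of $E$ are strict (hence homotopy) cofibration sequences and Lemma~\ref{lextodadiag} that the relevant terms are \w[,]{E(0,2,\dotsc,2)\simeq\Sigma\sp{k}A\sb{0}} \w{E(1,2,\dotsc,2,1)\simeq A\sb{k+1}} and so on. By Definition~\ref{dhtb}, with the weak homotopy type of $D$ held fixed, a choice of data for \w{\lra{f\sb{0},\dotsc,f\sb{k+1}}} is a map \w{\varphi\sb{k+1}:E(1,2,\dotsc,2)\to A\sb{k+2}} restricting on \w{E(1,2,\dotsc,2,1)} to a representative of \w[,]{[f\sb{k+1}]} and its value is \w{[\varphi\sb{k+1}\circ\alpha\sb{k+1}]} along the cofibration sequence \w[.]{E(0,2,\dotsc,2)\xra{\alpha\sb{k+1}}E(1,2,\dotsc,2)\epic E(2,\dotsc,2)} If this value vanishes, \w{\varphi\sb{k+1}} extends to some \w[,]{\psi\sb{k+1}:E(2,\dotsc,2)\to A\sb{k+2}} and Lemma~\ref{lhtb} identifies \w{E(0,\dotsc,0)\to E(1,0,\dotsc,0)\to\dotsc\to E(2,\dotsc,2)\xra{\psi\sb{k+1}}A\sb{k+2}} as a strictification of the next initial segment extending the class of $D$.

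For the converse, given such an extended strictification I would use part~(b) again to replace it by a cofibrant enhanced strictification \w{\wD':I\sp{k+2}\sb{2}\to\C} whose restriction to \w{I\sp{k+1}\sb{2}} has the weak homotopy type of \w[;]{D} running Remark~\ref{rhtb} in reverse extracts from \w{\wD'} an admissible \w{\varphi\sb{k+1}} together with an extension \w[,]{\psi\sb{k+1}} and exactness of the cofibration sequence then forces \w[,]{\varphi\sb{k+1}\circ\alpha\sb{k+1}\sim\ast} i.e.\ some value of the bracket is $0$. Part~(a) is the case \w[:]{k=1} there is no obstruction to strictifying a length-$2$ Toda diagram (the cofiber-sequence recipe of \S\ref{scntw} always produces \w[),]{\wD} so \w{\lra{f\sb{0},f\sb{1},f\sb{2}}} is the sole obstruction to a length-$3$ strictification, and the diagram \wref{eqthreesquare} exhibits its value as \w[,]{[\varphi\circ g\sb{0}]} as in \S\ref{sotb}.

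The step I expect to be the real obstacle is the compatibility in the converse direction: one must arrange an \emph{arbitrary} strictification of the longer segment to restrict, up to weak equivalence \emph{of diagrams}, to one carrying the \emph{given} weak homotopy type of \w[,]{D} rather than merely to some objectwise-equivalent diagram, so that the data read off genuinely computes a value of \w{\lra{f\sb{0},\dotsc,f\sb{k+1}}} attached to $D$. This is where left properness of $\C$ and the strong directedness of \w{I\sp{n}\sb{2}} (Remark~\ref{retodadiag}) are used \wh to promote the objectwise equivalence over \w{I\sp{k+1}\sb{2}} to a weak equivalence of extended diagrams \wh after which everything reduces to bookkeeping with the marker/stage/remainder calculus of Notation~\ref{nextodadiag} and Lemma~\ref{lextodadiag}.
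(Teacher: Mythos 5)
Your proposal is correct and follows essentially the same route as the paper, which gives no separate proof but introduces Theorem \ref{trect} explicitly as a summary of the section: part (b) is Definition \ref{detodadiag} together with cofibrant replacement in the diagram model structure, the ``if'' direction of (c) is exactly Lemma \ref{lhtb}, the converse is read off from Definition \ref{dhtb} and Remark \ref{rhtb}, and (a) is the base case worked out in \S \ref{scntw} and \S \ref{sotb}. Your closing paragraph also correctly isolates the one point (promoting an objectwise equivalence over \w{I\sp{k+1}\sb{2}} to an equivalence of extended diagrams in the converse direction, using left properness and the directedness of Remark \ref{retodadiag}) where more than bookkeeping is required.
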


%
%c2   Toda brackets and filtered objects
%
\sect{Toda brackets and filtered objects}
\label{cgtb}

In stable model categories $\C$ (cf.\ \cite{HPStrA}), an
alternative description of Toda brackets, in terms of filtered
objects in $\C$, is available. This involves continuing the
extended diagram one step further in each direction.

We should remark that there is no ``standard'' definition of (long) Toda brackets,
even stably (see \cite{CFranH,GWalkL} and the discussion in \cite{OOshU}).

\begin{mysubsection}{Forward cubes}\label{sfc}
More formally, we assume given a Toda diagram
\begin{myeq}\label{eqlongertd}
A\sb{-1}~\xrightarrow{[f\sb{-1}]}~A\sb{0}~\xrightarrow{[f\sb{0}]}~\dotsc~\to~
A\sb{n}~\xrightarrow{[f\sb{n}]}~A\sb{n+1}
\end{myeq}
\noindent of length \w{n+2} as in \wref[,]{eqtodadiag} together with a rectification
\w{D:I\sp{n}\sb{\md}=I\sp{n}\sb{2}\to\C} of the central segment
\w{A\sb{0}\to\dotsc\to A\sb{n}} of length $n$, with cofibrant replacement
\w{\wD:I\sp{n}\sb{2}\to\C} and corresponding extended diagram \w{E:I\sp{n}\sb{3}\to\C}
as in \S \ref{dextodadiag} (see diagram \ref{scnth} above).

 Now consider the ``forward'' $n$-dimensional  \wwb{2\times\dotsc\times2}subcube
 \w{I\sp{n}\sb{\fwd}\cong I\sp{n}\sb{2}} of \w[,]{I\sp{n}\sb{3}} consisting of those vertices
 labelled by \w{J=(\vare\sb{1},\dotsc,\vare\sb{n})} with each \w[.]{\vare\sb{i}\in\{1,2\}} Let
  \w{\wE:I\sp{n}\sb{\fwd}\to\C} be a cofibrant replacement for \w[.]{E\rest{I\sp{n}\sb{\fwd}}}
We have a new forward \wwb{3\times\dotsc\times 3}cube
\w[,]{\wI\sp{n}\sb{3}} with vertices now labelled by
\w{J=(\vare\sb{1},\dotsc,\vare\sb{n})} with
\w[,]{\vare\sb{i}\in\{1,2,3\}} and  we see that $\wE$ be extended
to the \emph{extended forward diagram} \w{F:\wI\sp{n}\sb{3}\to\C}
by taking strict cofibration sequences in each direction, as in \S
\ref{dextodadiag}.

Moreover, Lemma \ref{lextodadiag} extends in the obvious way to $F$, with
$$
F(\vare\sb{1}\dotsc\vare\sb{i-1},3,\vare\sb{i+1}\dotsc\vare\sb{n})~\simeq~
\Sigma E(\vare\sb{1}\dotsc\vare\sb{i-1},0,\vare\sb{i+1}\dotsc\vare\sb{n})
$$
\noindent for any \w{\vare\sb{j}\in\{1,2,3\}} \wb[.]{j= 1, \dotsc,i-1,i+1,\dotsc,n}

Furthermore, if $\C$ is a stable model category (see \cite[Ch.\ 7]{HovM}), $E$ can actually be
recovered from $F$, up to weak equivalence,  by taking homotopy fibers of the forward
generating cube \w{F\rest{I\sp{n}\sb{\fwd}}=\wE\rest{I\sp{n}\sb{\fwd}}} along each edge.

Now let \w[,]{J\sp{k}:=(1\dotsc 12\dotsc 2)} with \w{n-k+1} digits 1 followed by
\w{k-1} digits 2 \wb[,]{1\leq k\leq n} and write \w[.]{X\sp{k}:=\wE(J\sp{k})=F(J\sp{k})}
Note that by Lemma \ref{lextodadiag} we have homotopy
cofibration sequences of the form
\begin{myeq}\label{eqpxcofseq}
  \Sigma\sp{k-1}A\sb{n-k}\simeq E(\mbox{$\displaystyle{\underbrace{1\dotsc 1}_{n-k}}$}\,0\,
    \mbox{$\displaystyle{\underbrace{2\dotsc 2}_{k-1}}$}) ~
    \xra{g\sb{k}}~E(J\sp{k})~\xra{i\sb{k}}~E(J\sp{k+1})~,
\end{myeq}
\noindent so for $F$ we have homotopy cofibration sequences
\begin{myeq}\label{eqxcofseq}
  F(J\sp{k})~\xra{i\sb{k}}~F(J\sp{k+1})~\xra{r\sb{k}}~
  F(\mbox{$\displaystyle{\underbrace{1\dotsc 1}_{n-k}}$}\,3\,
  \mbox{$\displaystyle{\underbrace{2\dotsc 2}_{k-1}}$}) \simeq\Sigma\sp{k}A\sb{n-k+1}
\end{myeq}
\noindent for each \w[.]{0\leq k<n} By convention we set \w[;]{F(J\sp{0}):=\ast} then
\w[,]{F(J\sp{1})=F(1\dotsc 1)\simeq A\sb{n}} so \wref{eqxcofseq} is still a homotopy
cofibration sequence for \w[.]{k=0} We write $X$ for \w[,]{F(J\sp{n})} and denote the composite
cofibration \w{i\sb{n-1}\circ\dotsc\circ i\sb{1}:F(J\sp{1})\hra F(J\sp{n})} by \w[.]{j\sb{X}:A\sb{n}\to X}
\end{mysubsection}

\begin{remark}\label{rfwdiag}
Note that the composite
\w{r\sb{k}\circ g\sb{k+1}:\tS\sp{k}A\sb{n-k}\to\tS\sp{k}A\sb{n-k+1}} represents
\w{\tS\sp{k}f\sb{n-k}} (because of the functoriality of the extension of $\wD$ to $E$ and
then to $F$).
\end{remark}

\begin{lemma}\label{lfwdiag}
  Given a Toda diagram \wref{eqlongertd} of length \w[,]{n+2} the data actually
  needed to specify a value of  the Toda bracket \w{\lra{f\sb{-1},\dotsc,f\sb{n}}} of
  length \w{n+1} consists of:
\begin{enumerate}
\renewcommand{\labelenumi}{(\alph{enumi})~}
\item The extended forward diagram \w{F:\wI\sp{n}\sb{3}\to\C} associated to the
  strictification of \w[;]{A\sb{0}\to\dotsc\to A\sb{n}}
\item A map \w{\alpha\sb{n+1}:\Sigma\sp{n}A\sb{-1}\to X:=F(J\sp{n})} such that
  \w[.]{r\sb{n}\circ\alpha\sb{n+1}\sim \Sigma\sp{n}f\sb{-1}}
\item A map \w{\varphi\sb{n+1}:X\to A\sb{n+1}} such that
  \w[.]{\varphi\sb{n+1}\circ j\sb{X}\sim f\sb{n}}
\end{enumerate}
\noindent The associated value is the class of
\w[.]{\varphi\sb{n+1}\circ\alpha\sb{n+1}:\Sigma\sp{n}A\sb{-1}\to A\sb{n+1}}
\end{lemma}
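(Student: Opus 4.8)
The plan is to reduce Lemma~\ref{lfwdiag} to the already-established Definition~\ref{dhtb} by identifying, piece by piece, the data $(F,\alpha\sb{n+1},\varphi\sb{n+1})$ of the forward picture with the data $\lra{D,\varphi\sb{n}}$ of the cube picture, applied to the Toda diagram \wref{eqlongertd} indexed so that $A\sb{-1}$ plays the role of the initial object. First I would recall that Definition~\ref{dhtb}, applied to the length-$(n+1)$ Toda bracket $\lra{f\sb{-1},\dotsc,f\sb{n}}$, requires: a strictification of the initial segment $A\sb{-1}\to A\sb{0}\to\dotsc\to A\sb{n}$ of length $n+1$ (equivalently, by Lemma~\ref{lhtb} and Theorem~\ref{trect}(b), of the central segment $A\sb{0}\to\dotsc\to A\sb{n}$ together with a nullhomotopy realizing the vanishing of $\lra{f\sb{-1},\dotsc,f\sb{n-1}}$), and then a map $\varphi$ out of the appropriate ``$(1,2,\dotsc,2)$'' vertex of the corresponding extended cube, restricting to a representative of $[f\sb{n}]$ on the ``$(1,2,\dotsc,2,1)$'' subvertex. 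So the task is purely to translate each of these between the two indexing schemes.

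The key identifications are as follows. For (a): the extended forward diagram $F:\wI\sp{n}\sb{3}\to\C$ constructed in \S\ref{sfc} by taking iterated strict cofibration sequences of $\wE$ is, by Remark~\ref{rcofsq} and the functoriality of strict cofibration sequences, precisely (a cofibrant model of) the sub-data of the extended cube $E:I\sp{n+2}\sb{3}\to\C$ for \wref{eqlongertd} needed to run Definition~\ref{dhtb} — indeed, under the reindexing that shifts $A\sb{-1}$ into initial position, the vertices $F(J\sp{k})$ correspond to the vertices $E(1,2,\dotsc,2,\underbrace{1\dotsc1}_{n-k}0\underbrace{2\dotsc2}_{k-1})$ appearing in the ``middle cube'' of Remark~\ref{rhtb}, and the sequences \wref{eqxcofseq} are exactly the cofibration sequences \wref{eqcofseq} for that cube. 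For (b): the map $\alpha\sb{n+1}$ is the map $E(0,2,\dotsc,2)\to E(1,2,\dotsc,2)$ of Definition~\ref{dhtb} for the diagram \wref{eqlongertd}; the source $E(0,2,\dotsc,2)\simeq\Sigma\sp{n}A\sb{-1}$ by Lemma~\ref{lextodadiag}(c) (with $r(J)=n$, since there are $n$ twos in the remainder and $M(J)$ is empty), the target is $X=F(J\sp{n})$, and the condition $r\sb{n}\circ\alpha\sb{n+1}\sim\Sigma\sp{n}f\sb{-1}$ records, via \wref{eqxcofseq} and Remark~\ref{rfwdiag}, that $\alpha\sb{n+1}$ is compatible with the first map $[f\sb{-1}]$ of the Toda diagram — i.e.\ that it is an admissible representative of the appropriate vertex map, as demanded inductively by the construction. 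For (c): the map $\varphi\sb{n+1}:X\to A\sb{n+1}$ is $\varphi\sb{n}:E(1,2,\dotsc,2)\to A\sb{n+1}$ of Definition~\ref{dhtb}; the condition $\varphi\sb{n+1}\circ j\sb{X}\sim f\sb{n}$ says exactly that its restriction to $F(J\sp{1})\simeq A\sb{n}$, i.e.\ to the subvertex $E(1,2,\dotsc,2,1)$, represents $[f\sb{n}]$, which is the defining requirement on $\varphi\sb{n}$. With these three identifications in place, the composite $\varphi\sb{n+1}\circ\alpha\sb{n+1}$ is literally $\varphi\sb{n}\circ\alpha\sb{n}$, which is the value of $\lra{f\sb{-1},\dotsc,f\sb{n}}$ by Definition~\ref{dhtb}, proving the final assertion.

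The main obstacle — and the only nonformal point — is verifying that $F$, which is built in \S\ref{sfc} by first passing to the cofibrant replacement $\wE$ of $E\rest{I\sp{n}\sb{\fwd}}$ and then extending within a \emph{smaller} cube $\wI\sp{n}\sb{3}$, genuinely carries the same homotopical information as the relevant faces of the big extended cube $E:I\sp{n+2}\sb{3}\to\C$ for \wref{eqlongertd}. The point is that Lemma~\ref{lextodadiag} and its stated extension to $F$ in \S\ref{sfc} pin down all the objects $F(J)$ up to weak equivalence as suspensions of the $A\sb{i}$'s, and Remark~\ref{rfwdiag} pins down the relevant structure maps; so one argues that any two cofibrant models obtained by taking iterated strict cofibration sequences from weakly equivalent generating data are weakly equivalent as diagrams, using left properness of $\C$ and the $3\times3$ Lemma exactly as in Definition~\ref{dextodadiag}. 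Once this compatibility is granted, everything else is a bookkeeping translation between the $\{1,2\}$-forward indexing and the $\{0,1,2\}$-cube indexing, requiring no further homotopy-theoretic input.
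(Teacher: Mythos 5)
Your proposal follows essentially the same route as the paper's own (very brief) proof: identify the forward cube with the middle cube \w{I\sp{n}\sb{\md}} of Remark \ref{rhtb} for the longer diagram \wref[,]{eqlongertd} so that \w{X=F(2\dotsc2)} becomes \w{E(12\dotsc2)} in the full extended cube, observe that the only data not already determined by $F$ are \w{\alpha\sb{n+1}} and \w[,]{\varphi\sb{n+1}} and read off the two compatibility conditions from the cofibration sequences \wref{eqpxcofseq}--\wref{eqxcofseq} and Lemma \ref{lextodadiag}. One small bookkeeping slip: the vertex \w{F(J\sp{k})} corresponds to \w{E(1,J\sp{k})} (prepend a digit $1$ for the \ww{A\sb{-1}}-direction), not to a vertex containing a digit $0$ \wwh the $0$-vertex is the source of the map \w{g\sb{k}} in \wref[,]{eqpxcofseq} not \w{F(J\sp{k})} itself.
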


\begin{proof}
  This follows from Definition \ref{dhtb}, once we notice that the $n$-dimensional
  \wwb{3\times\dotsc\times 3}cube \w{I\sp{n}\sb{3}} which indexes diagram $\wD$ of \S \ref{sfc}
  is actually the middle cube \w{I\sp{n}\sb{\md}} of Remark \ref{rhtb} with respect to
  the Toda diagram \wref[,]{eqlongertd} and thus \w{X=F(2\dotsc 2)} is in fact \w{E(12\dotsc 2)}
  (with $n$ digits 2) for the full \wwb{3\times\dotsc\times 3}cube diagram
  \w{E:I\sp{n+1}\sb{3}\to C} of \wref[.]{eqlongertd}
  However, from the description in \S \ref{dhtb} we see that the only
  part of the enhanced strictification $E$ needed to compute the Toda bracket and not determined by
  $F$ is the map \w[.]{\alpha\sb{n+1}:\tS\sp{n}A\sb{-1}\to E(12\dotsc 2)} The fact that
  \w{r\sb{n}\circ\alpha\sb{n+1}\sim \Sigma\sp{n}f\sb{-1}} follows by continuing the cofibration
  sequence \wref{eqpxcofseq} for \w{k=n+1} one step to the right, as in \wref[.]{eqxcofseq}

  The other ingredient needed is the precisely the map \w{\varphi\sb{n+1}:E(12\dotsc 2)\to A\sb{n+1}}
  associated to the vanishing of the right Toda bracket \w[.]{\lra{f\sb{0},\dotsc,f\sb{n}}}
  The fact that \w{\varphi\sb{n+1}\circ j\sb{X}\sim f\sb{n}} can be read off from Lemma \ref{lextodadiag}
  and the description of \w{\varphi\sb{n}} in \S \ref{dhtb}.
\end{proof}

\begin{example}\label{egtoda}
As Toda showed in \cite[Proposition 5.6]{TodC}, the generator \w{\nu'\in\pi\sb{6}\bS{3}}
(of order $4$) is one value of the Toda bracket for the length $3$ \wb{n=1} diagram:
\begin{myeq}\label{eqnup}
  \bS{5}~\xra{\ett{4}}~\bS{4}~\xra{2\iot{4}}~\bS{4}~\xra{\ett{4}}~\bS{3}~,
\end{myeq}
\noindent where the maps \w{\ett{k}} are suspended Hopf maps (of order $2$). There is
indeterminacy of order $2$, and the other value of the Toda bracket is another generator. 

Thus \w[,]{X\sb{0}=\ast} \w[,]{X\sb{1}=A\sb{1}=\bS{4}} with
\w{X\sb{2}} the cofiber of \w[,]{f\sb{0}=2\iot{4}} the
$5$-dimensional mod $2$ Moore space. The fact that \w{\ett{4}} has
order $2$ implies that it extends to a map
\w[,]{\varphi\sb{2}:X\sb{2}\to\bS{3}} while the fact that
\w{\ett{6}} also has order $2$ implies  that it factors through
\w{X\sb{2}} in the continued cofibration sequence
\w{X\sb{2}\to\bS{5}\xra{2}\bS{5}} (since it is in the
stable range), yielding \w[.]{\alpha\sb{2}:\bS{6}\to X\sb{2}}

The composite \w{\bS{6}=\Sigma A\sb{0}~\xra{\varphi\sb{2}\circ\alpha\sb{2}}~A\sb{2}=\bS{3}}
is the required value \w{\nu'} of \w[.]{\lra{\ett{3},2\iot{4},\ett{4}}}
\end{example}

\begin{mysubsect}{Generalized Toda brackets associated to a filtered object}
\label{sgtbfo}

We do not in fact need to be given a Toda diagram \wref{eqtodadiag} in order to use this
approach to defining Toda brackets \wh all we need are cofibration sequences
as in \wref[.]{eqxcofseq}

Thus assume that for each \w{-1\leq k<\ell}  we have a homotopy cofibration sequence
\begin{myeq}\label{eqexcofseq}
  X\sb{k}~\xra{j\sb{k}}~X\sb{k+1}~\xra{r\sb{k}}~C\sb{k+1}~\xra{\delta\sb{k}}~
  \Sigma X\sb{k}~\dotsc
\end{myeq}
\noindent in some pointed model category $\C$, starting with \w[,]{X\sb{-1}=\ast} so
\w[.]{X\sb{0}=C\sb{0}} We think of \w{X:=X\sb{\ell}} as an object in $\C$ \emph{filtered by}
\w[,]{X\sb{0}\xra{j\sb{0}} X\sb{1}\xra{j\sb{1}} \dotsc \xra{j\sb{\ell-1}}X\sb{\ell}} with
\emph{filtration quotients} \w{C\sb{k}} \wb[.]{k=0,\dotsc,\ell} Again we denote the composite cofibration
\w{j\sb{n-1}\circ\dotsc\circ j\sb{0}:X\sb{0}\hra X\sb{\ell}} by \w[.]{j\sb{X}:C\sb{0}\to X}

The cofibration sequence \wref{eqexcofseq} might extend to the left, presenting
\w{X\sb{k+1}} as the homotopy cofiber of a map \w{g\sb{k}:B\sb{k}\to X\sb{k}}
  with \w{C\sb{k+1}\simeq\Sigma B\sb{k}} and \w[,]{\delta\sb{k}\sim\Sigma g\sb{k}}
  as in \wref[.]{eqxcofseq} This need not always exist, but we shall use this
  notation when we have such a map \w[.]{g\sb{k}}

We then think of the filtered space $X$ as a template for a \emph{generalized Toda
  bracket of length} \w[.]{\ell+1} The \emph{data} for the bracket consist of any
two homotopy classes of maps \w{\alpha\sb{\ell+1}:W\to X} and
\w{\varphi\sb{\ell+1}:X\to Z} (where $W$ and $Z$ are arbitrary, though often $W$
is a sphere). The \emph{value} associated to this data is the homotopy class
of the composite \w{\varphi\sb{\ell+1}\circ\alpha\sb{\ell+1}} in \w[.]{[W,Z]}
\end{mysubsect}

\begin{defn}\label{dfiltoda}
Given homotopy cofibration sequences as in \wref[,]{eqexcofseq} we denote
\w{\Sigma r\sb{k-1}\circ\delta\sb{k}:C\sb{k+1}\to \Sigma C\sb{k}} by
\w[.]{\gam{k+1}} We also write \w{\Sigma\sp{-1}C\sb{\ell+1}} for
$W$ and \w{\Sigma\sp{\ell+1}C\sb{-1}} for $Z$ as above and define
\w{\Sigma\sp{-1}\gam{\ell+1}:=r\sb{\ell}\circ\alpha\sb{\ell+1}}
and \w[.]{\Sigma\sp{\ell}\gam{0}:=\varphi\sb{\ell+1}\circ j\sb{X}}
\end{defn}

\begin{prop}\label{psttoda}
Given a filtered object $X$ as in \S \ref{sgtbfo}, the sequence
\begin{myeq}\label{eqdesustoda}
\Sigma\sp{-1}C\sb{\ell+1}\xra{\Sigma\sp{-1}\gam{\ell+1}}C\sb{\ell}\xra{\gam{\ell}}
\Sigma C\sb{\ell-1}\xra{\Sigma\gam{\ell-1}}
\Sigma\sp{2}C\sb{\ell-2}\to\dotsc
\Sigma\sp{\ell}C\sb{0}\xra{\Sigma\sp{\ell}\gam{0}}\Sigma\sp{\ell+1}C\sb{-1}
\end{myeq}
\noindent is a Toda diagram of length \w[.]{\ell+2}
Furthermore, when \wref{eqexcofseq} is obtained as in \S \ref{sfc},
\wref{eqdesustoda} is the $\ell$-fold suspension \wref{eqtodadiag} (extended to
as above to \w[),]{A\sb{\ell+1}} with \w{C\sb{k}\simeq\Sigma\sp{k}A\sb{\ell+1-k}} and
\w[.]{\gam{k}\sim\Sigma\sp{k}f\sb{\ell+1-k}}
\end{prop}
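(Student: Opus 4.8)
The plan is to verify the two assertions of the proposition in turn, the first being essentially a bookkeeping exercise in desuspension and the second being a compatibility check with the construction of Section~\ref{cgtb}. For the first assertion, I would start from the hypothesis that for each \w{-1\leq k<\ell} we have a homotopy cofibration sequence \wref{eqexcofseq}, and recall that applying the functor \w{[-,\Sigma C\sb{k}]} to such a sequence (or, more conceptually, the Puppe sequence of a homotopy cofibration) shows that the composite \w{\gam{k+1}=\Sigma r\sb{k-1}\circ\delta\sb{k}:C\sb{k+1}\to\Sigma C\sb{k}} fits into a longer exact sequence. The key point to extract is that \w{\Sigma\gam{k}\circ\gam{k+1}} is null: indeed \w{\gam{k}} factors through \w[,]{\delta\sb{k-1}:C\sb{k}\to\Sigma X\sb{k-1}} while \w{\gam{k+1}} factors through \w{\Sigma r\sb{k-1}} after \w[,]{\delta\sb{k}} and the composite \w{\Sigma X\sb{k-1}\xra{\Sigma r\sb{k-1}\ \text{then}\ \Sigma\delta\sb{k-1}} \Sigma C\sb{k}\to\Sigma\sp{2}X\sb{k-1}} vanishes since \w{r\sb{k-1}\circ\delta\sb{k-1}\sim 0} is part of the Puppe sequence one stage earlier. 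So each interior composite in \wref{eqdesustoda} is the strict zero map up to homotopy; suspending once more handles all the shifted terms uniformly. The two boundary cases — the composite \w{C\sb{\ell}\xra{\gam{\ell}}\Sigma C\sb{\ell-1}} precomposed with \w[,]{\Sigma\sp{-1}\gam{\ell+1}=r\sb{\ell}\circ\alpha\sb{\ell+1}} and the composite \w{\Sigma\sp{\ell}\gam{0}=\varphi\sb{\ell+1}\circ j\sb{X}} postcomposed after \w{\Sigma\sp{\ell}\gam{1}} — are checked the same way using Definition~\ref{dfiltoda}: \w{r\sb{\ell}\circ\alpha\sb{\ell+1}} lands in \w{C\sb{\ell}} via \w[,]{r\sb{\ell}} and \w{\gam{\ell}\circ r\sb{\ell}\sim\Sigma r\sb{\ell-1}\circ\delta\sb{\ell}\circ r\sb{\ell}} is null because \w{\delta\sb{\ell}\circ r\sb{\ell}\sim 0} in the Puppe sequence; dually \w{j\sb{X}} factors through \w{j\sb{\ell-1}} whose composite with \w{r\sb{\ell-1}} (hidden inside \w[)]{\gam{\ell}} vanishes. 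Thus \wref{eqdesustoda} is a Toda diagram of length \w[,]{\ell+2} after reindexing so that the \ww{m}-th object is \w{\Sigma\sp{m-1}C\sb{\ell+1-m}} for \w[.]{m=0,\dots,\ell+2}

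For the second assertion I would now specialize to the situation of \S~\ref{sfc}: here \wref{eqexcofseq} is \wref[,]{eqxcofseq} so \w{X\sb{k}} is \w[,]{F(J\sp{k})} \w{C\sb{k+1}\simeq\Sigma\sp{k}A\sb{n-k+1}} by the displayed weak equivalence in \wref[,]{eqxcofseq} and \w{C\sb{0}=X\sb{0}=F(J\sp{1})\simeq A\sb{n}} (reindexing \w{\ell=n} and matching the Toda diagram \wref[).]{eqtodadiag} I would then trace through the definition of \w{\gam{k+1}=\Sigma r\sb{k}\circ\delta\sb{k}} — noting the index shift \w{k-1\mapsto k} forced by the convention \w[ ]{X\sb{-1}=\ast} in \wref{eqexcofseq} versus \w{F(J\sp{0}):=\ast} in \S~\ref{sfc} — and invoke Remark~\ref{rfwdiag}, which says precisely that the composite \w{r\sb{k}\circ g\sb{k+1}:\tS\sp{k}A\sb{n-k}\to\tS\sp{k}A\sb{n-k+1}} represents \w[.]{\tS\sp{k}f\sb{n-k}} Since \w{\delta\sb{k}\sim\Sigma g\sb{k}} under the identification \w[,]{C\sb{k+1}\simeq\Sigma B\sb{k}} this identifies \w{\gam{k+1}} (up to the weak equivalences \w[)]{C\sb{k}\simeq\Sigma\sp{k-1}A\sb{n-k+2}} with the suspension \w[,]{\Sigma\sp{k} f\sb{n-k}} i.e.\ \w{\gam{k}\sim\Sigma\sp{k}f\sb{\ell+1-k}} with \w[.]{\ell=n} The two boundary maps are identified with \w{\Sigma\sp{n}f\sb{-1}} and \w{\Sigma\sp{n}f\sb{n}} via Lemma~\ref{lfwdiag}(b)--(c): \w{\Sigma\sp{-1}\gam{\ell+1}=r\sb{\ell}\circ\alpha\sb{\ell+1}\sim\Sigma\sp{n}f\sb{-1}} is exactly condition (b), and \w{\Sigma\sp{\ell}\gam{0}=\varphi\sb{\ell+1}\circ j\sb{X}\sim\Sigma\sp{n}f\sb{n}} (suspended appropriately) is condition (c). Hence \wref{eqdesustoda} is literally the \ww{n}-fold suspension of \wref[,]{eqtodadiag} as claimed.

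The main obstacle I anticipate is purely notational: pinning down the index conventions so that the two shifts line up — the \ww{\gam{k+1}=\Sigma r\sb{k-1}\circ\delta\sb{k}} of Definition~\ref{dfiltoda} uses \w{r\sb{k-1}} where \S~\ref{sfc}'s cofibration \wref{eqxcofseq} naturally produces \w[,]{r\sb{k}} and the empty-stage conventions \w{X\sb{-1}=\ast} versus \w{F(J\sp{0})=\ast} differ by one — and making sure the weak equivalences \w{C\sb{k}\simeq\Sigma\sp{k-1}A\sb{\ell+2-k}} are applied consistently throughout, rather than any genuinely hard homotopy-theoretic content. The homotopy-cofibration input (the Puppe sequence and the null-composites it provides) is standard, and Remark~\ref{rfwdiag} together with Lemma~\ref{lfwdiag} already packages exactly the compatibility needed; the proof is therefore a careful unwinding of definitions, and I would present it as such, keeping the reindexing explicit at each step.
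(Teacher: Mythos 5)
Your proposal is correct and follows essentially the same route as the paper: the adjacent composites in \eqref{eqdesustoda} vanish because each contains, as its middle factors, two successive maps of a Puppe (cofibration) sequence from \eqref{eqexcofseq}, and the identification with the $\ell$-fold suspension of \eqref{eqtodadiag} is read off from Remark \ref{rfwdiag} and Lemma \ref{lfwdiag}. The only blemishes are the index slips you yourself anticipate (e.g.\ the null middle composite is $\delta\sb{k-1}\circ r\sb{k-1}$, not $r\sb{k-1}\circ\delta\sb{k-1}$, and the right-hand boundary vanishing comes from $\Sigma j\sb{0}\circ\delta\sb{0}$ inside $\Sigma\gam{0}\circ\gam{1}$ rather than from $\gam{\ell}$); they do not affect the argument.
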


\begin{proof}
We have
\begin{myeq}\label{eqadjcompz}
\Sigma\gam{k}\circ\gam{k+1}~\sim~0\hsp\text{for each}\hsm 0<
k<\ell
\end{myeq}
\noindent since the middle factors of the composite are two successive maps
from \wref[.]{eqexcofseq} Moreover, \wref{eqadjcompz} also holds for \w{k=0} and
\w{k=\ell} (for the same reason). When \wref{eqexcofseq} is as in \S \ref{sfc}, the
claim follows from Lemma \ref{lfwdiag}.
\end{proof}

\begin{mysubsection}{CW filtrations}
\label{scwfil}
If \w{\bX\in\Topa} is an $\ell$-dimensional connected CW complex and \wref{eqexcofseq}
is its CW filtration, let \w{g\sb{j}: B\sb{j}\to X\sb{j}} be the attaching map for the
$j$-skeleton \w{X\sb{j}:=\bX\up{j}} of $\bX$, so the cofibration sequence
\w{B\sb{j}\xra{g\sb{j}} X\sb{j} \xra{i\sb{j}} X\sb{j+1}}  defines \w[.]{X\sb{j+1}}
 Thus for \w[,]{j=0,\dotsc\ell-1} \w{B\sb{j}} will be a wedge of $j$-spheres,
 and in the (suspended) Toda diagram \wref{eqdesustoda} associated to the CW
 filtration for $X$, each \w{\Sigma\sp{j}C\sb{\ell-j}} is a wedge of $\ell$-spheres.
Only \w{W=\Sigma\sp{-1}C\sb{\ell+1}} and \w{Z=\Sigma\sp{\ell+1}C\sb{-1}} can be arbitrary.

However, if \w{V\sb{1}\xra{f}V\sb{2}\xra{g}V\sb{3}} are maps between wedges of $\ell$-spheres
and \w[,]{g\circ f\sim 0} this must also hold when we restrict to each wedge summand
of \w{V\sb{1}} and project onto each summand of \w[.]{V\sb{3}}  Thus are reduced to the case
where $g$, as a map from a wedge of $m$ copies of \w{S\sp{\ell}} to \w[,]{S\sp{\ell}}
is given by a vector of integers \w[.]{(a\sb{1},\dotsc, a\sb{m})} Dually $f$ is also given 
by \w[,]{(b\sb{1},\dotsc, b\sb{m})} with \w[.]{\sum\sb{i}\,a\sb{i}b\sb{i}=0}
In this case the associated Toda bracket is trivial.
\end{mysubsection}

This suggests the following

\begin{defn}\label{dspherical}
   A \emph{spherical filtration} \w{X\sb{0}\to X\sb{1}\to\dotsc X\sb{\ell}} of length $\ell$
   on a CW complex \w{X=X\sb{\ell}} is a filtration as above such that
   for each \w[:]{0\leq k\leq \ell}
\begin{enumerate}
\renewcommand{\labelenumi}{(\alph{enumi})~}
\item The filtration quotient \w{C\sb{k}}  is homotopy equivalent to a
  \wwb{c\sb{k}-1}connected  wedge of spheres, (that is, \w{c\sb{k}} is the lowest dimension
  of a sphere in the wedge).
\item We never have two successive maps in \wref{eqdesustoda}
(and thus in \wref[)]{eqtodadiag} between wedges of spheres where the lowest spheres
have the same dimension. That is:
\begin{myeq}\label{eqspherical}
\text{if}\hsm c\sb{k}+1=c\sb{k+1}\hsm \text{then}\hsm c\sb{k-1}+1<c\sb{k}\hsm \text{and}\hsm
c\sb{k+1}+1<c\sb{k+2}
\end{myeq}
\noindent when defined.
\end{enumerate}
Let \w{c\sb{-1}-1} denote the connectivity of $Z$ and \w{c\sb{\ell+1}-1}
the connectivity of $W$, and assume that \wref{eqspherical} holds also for
\w{k=0} and \w[.]{k=\ell} 
Given homotopy classes \w{\alpha\sb{n+1}:W\to X} and
\w[,]{\varphi\sb{\ell+1}:X\to Z} we then say that
\w{\varphi\sb{\ell+1}\circ\alpha\sb{n+1}\in[W,Z]} is a \emph{value of
the spherical Toda bracket} associated to the filtration on $X$.
\end{defn}

\begin{remark}\label{rspherical}
  Using simplicial \Pa resolutions (see \cite[\S 4.2]{StoV}) and the connectivity results
  of \cite[Proposition 4.2.2]{BlaH}, one can show that for each \w[,]{n>r\geq 2}
  any \wwb{r-1}connected space $\bX$ has an $n$-skeleton with a spherical filtration
  of length \w[.]{\ell\leq\lceil\frac{2(n-r+1)}{3}\rceil}
We omit the proof, since this result is not needed for our purposes.
\end{remark}

\begin{example}\label{egfiltration}
  In the stable range the $2$-local spherically filtered \wwb{k+i}skeleta
  of \w{\bX=\bS{k}\lra{k}} \wb{i=1,\dotsc,4} are constructed as follows
  (using Toda's calculations in \cite[Ch. XIV]{TodC}):
\begin{enumerate}
\renewcommand{\labelenumi}{(\alph{enumi})~}
\item We start with \w[,]{C\sb{1}=Z\sb{1}:=\bS{k+1}\vee\bS{k+3}} with
  the covering map \w{p\up{1}:Z\sb{1}\to\bS{k}} given by the Hopf maps
  \w{\eta\sb{k+1}\bot\nu\sb{k+1}} (inducing a surjection in \w{\pi\sb{j}}
  for \w[).]{1\leq j\leq 4} Here \w{\alpha\bot\beta} indicates the map induced
  by the coproduct structure of the wedge. We may omit \w{\bS{k+3}} for
  \w[,]{i\leq 2} since \w{\bS{k+1}} itself is a \wwb{k+1}skeleton for $\bX$.
\item Next, we have the cofibration sequence
$$
  B\sb{1}:=\bS{k+1}\vee\bS{k+3}~
  \xra{2\iot{k+1}\bot(4\iot{k+3}-\iot{k+1}\eta\sb{k+1}\eta\sb{k+2})}~
Z\sb{1}~\xra{j\sb{1}}~Z\sb{2}~\xra{r\sb{1}}~ C\sb{2}=\bS{k+2}\vee\bS{k+4}~.
$$
\noindent Here \w{\gamma\sb{2}:C\sb{2}\to\Sigma C\sb{1}} on the lowest wedge summand
\w{\bS{k+2}} of \w{C\sb{2}} is the suspension of  \w{2\iot{k+1}} on the
\ww{\bS{k+1}}-summand on the right, composed with \w[.]{\Id:\Sigma Z\sb{1}\to\Sigma C\sb{1}}
This is just the order $2$ map.

The covering map \w{p\up{2}:Z\sb{2}\to\bS{k}} is induced from \w{p\up{1}}
by the fact that \w{\eta\sb{k+1}} has order $2$ and
\w[.]{4\nu\sb{k}=\eta\sb{k}\eta\sb{k+1}\eta\sb{k+2}}

Thus \w{Z\sb{2}\up{k+i}}  is already a \wwb{k+i}skeleton of $\bX$ for \w[,]{i\leq 2}
which may be identified with the mod $2$ Moore space \w[.]{\bM{k+1}}  Moreover, since
\begin{myeq}\label{eqpimoore}
   \pi\sb{i}\bM{k+1} ~\cong~\begin{cases}
     \ZZ/2\lra{\xi} & \text{for}\ i=k+1\\
     \ZZ/2\lra{\xi\circ\eta\sb{k+1}} & i=k+2\\
     \ZZ/4\lra{\beta} & i=k+3\\
     \ZZ/2\lra{\beta\circ\eta\sb{k+3}}\oplus\ZZ/2\lra{\xi\circ\nu\sb{k+1}} & i=k+4~.
     \end{cases}
\end{myeq}
\noindent we see that for \w[,]{3\leq i\leq 4} \w{Z\sb{2}} is constructed by wedging
\w{\bM{k+1}} with \w{\bS{k+3}} (yielding a \wwb{k+3}skeleton of $\bX$) and then attaching a
\wwb{k+4}cell along \w[.]{4\iot{k+3}-\xi\circ\eta\sb{k+1}\circ\eta\sb{k+2}}
\item Noting that from \wref{eqpimoore} and the choice of attaching map for \w[,]{\bS{k+3}}
  we find that \w{\pi\sb{k+3}Z\sb{2}\cong\ZZ/16\lra{\delta}} and
  \w[.]{\pi\sb{k+4}Z\sb{2}\cong\ZZ/2\lra{\delta\circ\eta\sb{k+3}}\oplus\ZZ/2
    \lra{\xi\circ\nu\sb{k+1}}}

We therefore have a cofibration sequence
$$
\bS{k+3}\vee\bS{k+4}~\xra{8\delta\bot\xi\circ\nu\sb{k+1}}~
Z\sb{2}~\xra{j\sb{2}}~Z\sb{3}~\xra{r\sb{2}}~ C\sb{3}=\bS{k+4}\vee\bS{k+5}~,
$$
\noindent with the covering map \w{p\up{3}:Z\sb{3}\to\bS{k}} induced from \w[,]{p\up{2}}
yielding a \wwb{k+4}skeleton for $\bX$.

In this case, \w{\gamma\sb{3}} from the lowest wedge summand \w{\bS{k+4}} of \w{C\sb{3}}
is the suspension of  \w[,]{8\delta}  composed with the pinch map
\w[,]{\Sigma\bM{k+1}\to\bS{k+3}} which is \w[,]{\eta\sb{k+3}} by \wref[.]{eqpimoore}
\item For the \wwb{k+5} and \wwb{k+6}skeleta we must kill \w{\pi\sb{k+4} Z\sb{3}} and then
  \w[.]{\pi\sb{k+5} Z\sb{4}}
\end{enumerate}
\end{example}

\end{document}